\newtheorem{thm0}{Theorem}[section]
\newtheorem{prop}[thm0]{Proposition}
\newtheorem{rem}[thm0]{Remark}
\newtheorem{cor}[thm0]{Corollary}
\newtheorem{defin}[thm0]{Definition}
\begin{document}

\title{Logarithmic bundles of multi-degree arrangements in $ \mathbf{P}^n $}
\author{Elena Angelini}
\address{Dipartimento di Matematica e Informatica, Universit\`a degli Studi di Ferrara - Via Machiavelli 30, 44121 Ferrara}
\email{elena.angelini@unife.it}
\maketitle


\begin{abstract} Let $ \mathcal{D} = \{D_{1}, \ldots, D_{\ell}\} $ be a multi-degree arrangement with normal crossings on the complex projective space $ \mathbf{P}^{n} $, with degrees $ d_{1}, \ldots, d_{\ell} $; let $ \Omega_{\mathbf{P}^{n}}^{1}(\log \mathcal{D}) $ be the logarithmic bundle attached to it. First we prove a Torelli type theorem when $ \mathcal{D} $ has a sufficiently large number of components by recovering them as unstable smooth irreducible degree-$ d_{i} $ hypersurfaces of $ \Omega_{\mathbf{P}^{n}}^{1}(\log \mathcal{D}) $. Then, when $ n = 2 $, by describing the moduli spaces containing $ \Omega_{\mathbf{P}^{2}}^{1}(\log \mathcal{D}) $, we show that arrangements of a line and a conic, or of two lines and a conic, are not Torelli. Moreover we prove that the logarithmic bundle of three lines and a conic is related with the one of a cubic. Finally we analyze the conic-case. \smallskip

\noindent {\bf Key words} Multi-degree arrangement, Hyperplane arrangement, Logarithmic bundle, Torelli theorem

\noindent {\bf MSC 2010} 14J60, 14F05, 14C34, 14C20, 14N05 

\end{abstract}

\section{Introduction}

In the complex projective space $ \mathbf{P}^{n} $, let $ \mathcal{D} $ be a union of $ \ell $ distinct smooth irreducible hypersurfaces with degrees $ d_{1}, \ldots, d_{\ell} $, i.e. a \emph{multi-degree arrangement}.  We can map $ \mathcal{D} $ to $ \Omega^{1}_{\mathbf{P}^{n}}(\log \mathcal{D}) $, the \emph{sheaf of differential $ 1 $-forms with logarithmic poles on $ \mathcal{D} $}. This sheaf was originally introduced by Deligne in~\cite{De} for arrangements with normal crossings. In this case, for all $ x \in \mathbf{P}^{n} $, the space of sections of $ \Omega^{1}_{\mathbf{P}^{n}}(\log \mathcal{D}) $ near $ x $ is defined as $ <d\log z_{1}, \ldots, d\log z_{k}, dz_{k+1}, \ldots, dz_{n}>_{\mathcal{O}_{\mathbf{P}^{n},x}} $, where $ z_{1}, \ldots, z_{n} $ are local coordinates such that $ \mathcal{D} = \{z_{1} \cdot \ldots \cdot z_{k} = 0\} $. In particular, $ \Omega^{1}_{\mathbf{P}^{n}}(\log \mathcal{D}) $ is a locally free sheaf over $ \mathbf{P}^{n} $ and it is called \emph{logarithmic bundle}.\\
A natural, interesting question is whether $ \Omega^{1}_{\mathbf{P}^{n}}(\log \mathcal{D}) $ contains information enough to recover $ \mathcal{D} $, which is the so-called \emph{Torelli problem} for logarithmic bundles. In particular, if the isomorphism class of $ \Omega^{1}_{\mathbf{P}^{n}}(\log \mathcal{D}) $ determines $ \mathcal{D} $, then $ \mathcal{D} $ is called a \emph{Torelli arrangement}. \newline
\indent In the mathematical literature, the first situation that has been analyzed is the case of hyperplanes. In \cite{Do-Ka} Dolgachev, Kapranov proved that, if $ \ell \leq n+2 $, then two different arrangements always give the same logarithmic bundle and in \cite{V} Vall\`es showed that, if $ \ell \geq n+3 $, then we can reconstruct the hyperplanes from the logarithmic bundle (as  its \emph{unstable hyperplanes}, see Definition \ref{D:hypunst}) unless they osculate a rational normal curve $ \mathcal{C}_{n} $ of degree $ n $ in $ \mathbf{P}^n $, in which case the logarithmic bundle is isomorphic to $ E_{\ell - 2}(\mathcal{C}_{n}^{\vee}) $, the \emph{Schwarzenberger bundle} (\cite{Sch2}, \cite{Sch1}) \emph{of degree $ \ell-2 $ associated to $ \mathcal{C}_{n}^{\vee} $}. Recently, Dolgachev (\cite{Do}) and Faenzi, Matei, Vall\`es (\cite{FMV}) solved this problem in the case of hyperplanes that do not necessarily satisfy the normal crossings property. \\
Concerning the higher degree case, Ueda and Yoshinaga (\cite{UY1}, \cite{UY2}) studied the case $ \ell = 1 $, characterizing generically the Torelli arrangements as the ones with $ d_{1} \geq 3 $. In \cite{A1} we analyzed hypersurfaces of the same degree $ d $ and, by means of the \emph{unstable hypersurfaces} of $ \Omega^{1}_{\mathbf{P}^n}(\log \mathcal{D}) $ (see Definition \ref{D:hypunst}), we proved a Torelli type theorem when $ \ell \geq {{n+d} \choose {d}}+3 $. Pairs of quadrics are also investigated in \cite{A1}. \\
Very recently Ballico, Huh, Malaspina (\cite{BHM}) and Dimca, Sernesi (\cite{DS}), generalizing the techniques, respectively, of \cite{A1} and \cite{UY2}, answered some Torelli type questions, respectively, in the case of logarithmic bundles over quadrics or products of projective spaces and for plane curves with nodes and cusps. \\
\indent In this paper, after recalling some preliminary tools ($\S$. $2, \, 3$), we consider multi-degree arrangements with normal crossings on $ \mathbf{P}^n $ ($\S$. $ 4 $), on $ \mathbf{P}^{2} $ ($\S$. $ 5, \, 6, \, 7 $) and conic-arrangements with normal crossings on $ \mathbf{P}^{2} $ ($\S$. $ 8 $). In Theorem \ref{multi-deg thm}, by generalizing the arguments used in \cite{A1} for hypersurfaces of the same degree and by applying a \emph{reduction technique}, we prove that if the number $ \ell_{i} $ of hypersurfaces of degree $ d_{i} $ in $ \mathcal{D} $ satisfies $ \ell_{i} \geq {{n+d_{i}} \choose {d_{i}}}+3 $, then we can generically recover the components of $ \mathcal{D} $. In $\S$. $ 5, \, 6, \, 7 $ we focus on some line-conic cases on $ \mathbf{P}^{2} $ and we prove that they are not of Torelli type (Corollaries \ref{C:line-conic}, \ref{C:2lines-conic}). In particular, in Theorem \ref{T:3lines-conic} we show a link between arrangements of three lines and a conic and arrangements with a cubic in the projective plane. Finally, $\S$. $ 8 $ is devoted to conics. The cases $ \ell \in \{1,2\} $ were studied in \cite{A1}; here we prove that for $ \ell \geq 4 $ a Torelli type result holds (Theorem \ref{T:molteconiche}). $ \ell = 3 $ is still a bit mysterious.\\

\noindent {\bf Acknowledgements} I am very grateful to Giorgio Ottaviani and Daniele Faenzi for introducing me to this interesting subject and for their help during the preparation of this work. I also thank Jean Vall\`es for several helpful comments.

\section{Preliminary definitions and notations}

Let $ \mathbf{P}^{n} $ be the $ n $-dimensional complex projective space with $ n \geq 2 $ and let $ \mathcal{D} = \{D_{1}, \ldots, D_{\ell}\} $ be an \emph{arrangement} on $ \mathbf{P}^{n} $, i.e. a family of smooth, irreducible, distinct hypersurfaces of $ \mathbf{P}^{n} $. Let us assume that $ \mathcal{D} $ has \emph{normal crossings}, that is $ \mathcal{D} $ is locally isomorphic (in the sense of holomorphic local coordinates changes) to a union of coordinate hyperplanes of $ \mathbf{C}^n $.\\ For all $ i \in \{1, \ldots, \ell \} $, $ D_{i} = \{f_{i} = 0\}$ with $ f_{i} \in \mathbf{C}[x_{0}, \ldots, x_{n}]_{d_{i}} $ for certain $ d_{i} $; thus $ \mathcal{D} = \{f = 0\} $, where $ f = f_{1}\cdot \ldots \cdot f_{\ell} $ has degree $ d = d_{1}+ \ldots + d_{\ell} $. In particular, if all $ d_{i} $'s are equal to $ 1 $ we speak of a \emph{hyperplane arrangement}, if they are equal to $ 2 $ we deal with an \emph{arrangement of quadrics} and so on. If different $ d_{i} $'s appear in $ \mathcal{D} $, then we call $ \mathcal{D} $ a \emph{multi-degree arrangement}. \\
\indent In order to introduce the notion of \emph{sheaf of logarithmic forms} on $ \mathcal{D} $ we refer to Deligne (\cite{De0},~\cite{De}). Let $ U $ be the complement of $ \mathcal{D} $ in $ \mathbf{P}^{n} $ and let $ j $ be the embedding of $ U $ in $ \mathbf{P}^{n} $. We denote by $ \Omega_{U}^1$ the sheaf of holomorphic differential $ 1 $-forms on $ U $ and by $ j_{\ast}\Omega_{U}^1 $ its direct image sheaf on $ \mathbf{P}^{n} $. Since $ \mathcal{D} $ has normal crossings, then for all $ x \in \mathbf{P}^{n} $ there exists a Euclidean neighbourhood $ I_{x} \subset \mathbf{P}^{n} $ such that $ I_{x} \cap \mathcal{D} = \{z_{1} \cdots z_{k} = 0 \} $, where $ \{z_{1}, \ldots, z_{k}\} $ is a part of a system of local coordinates. We have the following:
\begin{defin}\label{d:1}
The \emph{sheaf of differential $ 1 $-forms on $ \mathbf{P}^{n} $ with logarithmic poles on $ \mathcal{D} $} is the subsheaf $ \Omega_{\mathbf{P}^{n}}^{1}(\log \mathcal{D}) $ of $ j_{\ast}\Omega_{U}^1 $, such that, for all $ x \in \mathbf{P}^{n} $,
$$ \Gamma(I_{x},\Omega_{\mathbf{P}^{n}}^{1}(\log \mathcal{D})) = \{s \in \Gamma(I_{x},j_{\ast}\Omega_{U}^1) \,|\, s = \displaystyle{\sum_{i=1}^k u_{i} d\log z_{i}} \, +  \displaystyle{\sum_{i=k+1}^n v_{i} dz_{i}} \} $$
where $ u_{i}, v_{i} $ are locally holomorphic functions and $ d \log z_{i} = \displaystyle{{dz_{i}} \over {z_{i}}} $.
\end{defin}
\noindent Another way to describe these sheaves, which is useful for more general divisors and is equivalent to the previous one in the normal crossings case, is the following, (\cite{Sa},~\cite{Schenck}): 
\begin{defin}\label{d:Gaussmap}
The \emph{sheaf of diff. $ 1 $-forms on $ \mathbf{P}^{n} $ with log. poles on} $ \mathcal{D} $ is
$$ \Omega_{\mathbf{P}^n}^{1}(\log \mathcal{D}) = \mathcal{T}(\log\mathcal{D})^{\vee}(-1), $$
where $ \mathcal{T}(\log\mathcal{D}) $ is the kernel of the \emph{Gauss map} $ \mathcal{O}_{\mathbf{P}^{n}}^{n+1} \xrightarrow{(\partial_{0}{\it f}, \ldots, \partial_{{\it n}}{\it f})} \mathcal{O}_{\mathbf{P}^{n}}(d-1). $ 
\end{defin} 
\noindent Since $ \mathcal{D} $ has normal crossings, $ \Omega_{\mathbf{P}^{n}}^{1}(\log \mathcal{D}) $ is a locally free sheaf of rank $ n$,~\cite{De}. It is called the \emph{logarithmic bundle attached to $ \mathcal{D} $}. \\
Definition \ref{d:1} can be used, more generally, to introduce the logarithmic bundle of an arrangement with normal crossings $ \mathcal{D} $ on a smooth algebraic variety $ X $ (see also \cite{A1}). \\
\indent Our investigations are mainly based on the following:
\begin{thm0}\label{T:base}
$ \Omega_{\mathbf{P}^n}^{1}(\log \mathcal{D}) $ admits the short exact sequences \\
\begin{equation}\label{eq:residueexactsequence}
0 \longrightarrow \Omega_{\mathbf{P}^{n}}^1 \longrightarrow \Omega_{\mathbf{P}^{n}}^{1}(\log \mathcal{D}) \buildrel \rm {\it res} \over \longrightarrow \, \displaystyle{\bigoplus_{i=1}^{\ell} \mathcal{O}_{D_{i}}} \longrightarrow 0,
\end{equation}
where res denotes the \emph{Poincar\'e residue morphism} $($\cite{Do-Ka}$)$ and
\begin{equation}\label{eq:Anconas}
0 \longrightarrow \Omega_{\mathbf{P}^n}^{1}(\log \mathcal{D})^{\vee} \longrightarrow \mathcal{O}_{\mathbf{P}^n}(1)^{n+1} \oplus \mathcal{O}_{\mathbf{P}^n}^{\ell-1} \buildrel \rm {\it N} \over \longrightarrow \displaystyle{\bigoplus_{i=1}^{\ell} \mathcal{O}_{\mathbf{P}^n}(d_{i})} \longrightarrow 0, 
\end{equation}
where $ N $ is a $ \ell \times (n + \ell) $ matrix depending on the $ f_{j} $'s and their partial derivatives $($\cite{A1}$)$.
\end{thm0}
Our aim is to study the injectivity of the correspondence
\begin{equation}\label{eq:Torellimap}
\mathcal{D} \longmapsto \Omega_{\mathbf{P}^{n}}^{1}(\log \mathcal{D}) 
\end{equation}
where $ \mathcal{D} $ is a multi-degree arrangement with normal crossings with fixed degrees $ d_{1}, \ldots, d_{\ell} $, that is the \emph{Torelli problem for logarithmic bundles}. In the case of $ 1:1 $ correspondence we call $ \mathcal{D} $ an \emph{arrangement of Torelli type} or, simply, a \emph{Torelli arrangement}. \\
\indent In the next section we recall the main results concerning this problem in the case of hyperplanes (\cite{Do-Ka}, \cite{V}, \cite{AO}), of one smooth hypersurface (\cite{UY1}, \cite{UY2}, \cite{A1}), of \emph{many} smooth hypersurfaces of degree $ d \geq 2 $ and of two smooth quadrics (\cite{A1}). In some of them, the components of $ \mathcal{D} $ are recovered by looking at the set of \emph{unstable objects} of $ \Omega_{\mathbf{P}^{n}}^{1}(\log \mathcal{D}) $ of a given degree; to that end we make the following:  
\begin{defin}\label{D:hypunst}
Let $ D \subset \mathbf{P}^{n} $ be a hypersurface. We call $ D $ \emph{unstable} for $ \Omega_{\mathbf{P}^{n}}^{1}(\log \mathcal{D}) $ if the following condition holds: 
\begin{equation}\label{eq:hypunst}
H^{0}(D, {\Omega_{\mathbf{P}^n}^{1}(\log \mathcal{D})}^{\vee}_{|_{D}}) \not= \{0\}.
\end{equation}
\end{defin} 
\begin{rem}\label{r53}
Let us suppose that $ \mathcal{D} $ has $ \ell = \ell_{1} + \ldots + \ell_{m} $ components such that $ \ell_{i} $ have degree $ d_{i} $, $ i \in \{1, \ldots, m\} $. We are interested in Definition \ref{D:hypunst} when $ h^{0}(\mathbf{P}^{n},{\Omega_{\mathbf{P}^n}^{1}(\log \mathcal{D})}^{\vee}) = \{0\} $, that is, by using the same arguments of Remark 5.3 of \cite{A1}, when
\begin{equation}\label{eq:inequality}
\displaystyle{\sum_{i=1}^{m} ({\ell_{i}} \cdot d_{i}) > n +1}.
\end{equation}
\end{rem}
\begin{rem}\label{R:leftinclusion}
In Lemma 5.4 of \cite{A1}, by means of (\ref{eq:residueexactsequence}) we prove that each component $ D_{i} $ of $ \mathcal{D} $ is an unstable hypersurface of degree $ d_{i} $ for $ \Omega_{\mathbf{P}^{n}}^{1}(\log \mathcal{D}) $. \\
As in Definition \ref{D:hypunst}, we can introduce the notion of unstable hypersurface for $ \Omega_{X}^{1}(\log \mathcal{D}) $ when $ X $ is a smooth algebraic variety and $ \mathcal{D} $ is an arrangement with normal crossings on it. In a similar way we can prove that each element of $ \mathcal{D} $ is unstable for $ \Omega_{X}^{1}(\log \mathcal{D}) $.
\end{rem}

\section{Some known Torelli type results}

Let $ \mathcal{H} = \{H_{1}, \ldots, H_{\ell}\} $ be a hyperplane arrangement with normal crossings on $ \mathbf{P}^n $. If $ \ell \leq n+2 $, then $ \mathcal{H} $ isn't of Torelli type (\cite{Do-Ka}); otherwise we have the following result (\cite{V}, Theorem 3.1):
\begin{thm0}\label{T:hyperplanes}
If $ \ell \geq n+3 $ then $\mathcal{H} $ is the set of unstable hyperplanes of $ \Omega_{\mathbf{P}^n}^{1}(\log \mathcal{H}) $, unless $ H_{1}, \ldots, H_{\ell} $ osculate a rational normal curve $ \mathcal{C}_{n}\subset \mathbf{P}^{n} $ of degree $ n $, in which case all the hyperplanes lying on $ \mathcal{C}_{n}^{\vee} \subset (\mathbf{P}^{n})^{\vee} $ are unstable and $ \Omega_{\mathbf{P}^n}^{1}(\log \mathcal{H}) \cong E_{\ell - 2} (\mathcal{C}_{n}^{\vee}) $, the \emph{Schwarzenberger bundle of degree} $ \ell - 2 $ \emph{associated to} $ \mathcal{C}_{n}^{\vee} $.
\end{thm0}
\indent If $ \mathcal{D} = \{D_{1}\} $, where $ D_{1} \subset \mathbf{P}^{n} $ is a general hypersurface of degree $ d_{1} $, then $ \mathcal{D} $ is of Torelli type if and only if $ d_{1} \geq 3 $ (\cite{UY2}, Theorem 1; \cite{A1}, Proposition 6.1). \\
\indent Now, let $ \mathcal{D} = \{D_{1}, \ldots, D_{\ell}\} $  be an arrangement with normal crossings on $ \mathbf{P}^{n} $, with $ \ell \geq 2 $ and $ d_{i} = d \geq 2 $ for all $ i \in \{1, \ldots, \ell\} $. By associating to $ \mathcal{D} $ a hyperplane arrangement $ \mathcal{H} $ in $ \mathbf{P}^{ {{n+d} \choose {d}}-1 } $ through the $ d $\emph{-uple Veronese embedding} and by applying Theorem \ref{T:hyperplanes}, we get the following result (\cite{A1}, Theorem 5.5):
\begin{thm0}\label{T:samedegree} If $ \ell \geq {{n+d} \choose {d}} + 3 $ and $ \mathcal{H} $ is a hyperplane arrangement with normal crossings whose components don't osculate a rational normal curve of degree $ {{n+d} \choose {d}}-1 $ in $ \mathbf{P}^{ {{n+d} \choose {d}}-1 } $, then $ \mathcal{D} $ is the set of smooth, irreducible, degree-$ d $ hypersurfaces of $ \mathbf{P}^{n} $ unstable for $ \Omega_{\mathbf{P}^n}^{1}(\log \mathcal{D}) $.
\end{thm0}
\indent In (\cite{A1}, Theorem 7.5) we prove also that if $ \ell = d = 2 $ then $ \mathcal{D} $ is not a Torelli arrangement. Indeed, by using the \emph{simultaneous diagonalization} of the matrices of the smooth quadrics and a \emph{duality} argument, we get that two such arrangements have isomorphic logarithmic bundles if and only if they have the same tangent hyperplanes. \\
\indent In the next sections we present some recent results concerning multi-degree arrangements ($\S$. 4, 5, 6, 7) and an almost complete description of the conic-case ($\S$. 8).

\section{Many multi-degree hypersurfaces}

Let $ \mathcal{D} = \{D_{1}^{d_{1}}, \ldots, D^{d_{1}}_{\ell_{1}}, D_{1}^{d_{2}}, \ldots, D^{d_{2}}_{\ell_{2}}, \ldots \ldots, D_{1}^{d_{m}}, \ldots, D^{d_{m}}_{\ell_{m}}\} $ be a multi-degree arrangement with normal crossings in $ \mathbf{P}^{n} $ such that the components $ D_{1}^{d_{i}}, \ldots, D^{d_{i}}_{\ell_{i}} $ have degree $ d_{i} $, with $ i \in \{1, \ldots, m\} $ and $ d_{m} > d_{m-1} > \cdots > d_{1} $; let us denote by $ \Omega_{\mathbf{P}^n}^{1}(\log \mathcal{D}) $ the corresponding logarithmic bundle. \\
When the number of components in $ \mathcal{D} $ is \emph{sufficiently large}, the Torelli problem can be solved by generalizing the method used in \cite{A1} and by applying a \emph{reduction technique} inspired by the one adopted in \cite{V}. So, let $ \mathcal{H}_{d_{i}} $ be the arrangement with $ \ell_{i} $ hyperplanes on $ \mathbf{P}^{N_{i}} $, with $ i \in \{1, \ldots, m\} $ and $ N_{i} = {{n+d_{i}} \choose {d_{i}}} - 1 $, associated to $ \{D_{1}^{d_{i}}, \ldots, D^{d_{i}}_{\ell_{i}}\} $ by means of the $ d_{i} $-uple Veronese embedding, i.e. $ \nu_{d_{i}} : \mathbf{P}^n \longrightarrow \mathbf{P}^{N_{i}} $ and $ \nu_{d_{i}}([x_{0}, \ldots, x_{n}]) = [\ldots \, x^{I} \ldots] $, where $ x^{I} $ ranges over all monomials of degree $ d_{i} $ in $ x_{0},\ldots, x_{n} $. Let us assume that each $ \mathcal{H}_{d_{i}} $ has normal crossings on $ \mathbf{P}^{N_{i}} $ and let $ \Omega_{\mathbf{P}^{N_{i}}}^{1}(\log \mathcal{H}_{d_{i}}) $ be the associated logarithmic bundle. With the previous notation, let us consider the diagonal embedding: 
$$ \nu : \mathbf{P}^n \longrightarrow \mathbf{P} = \displaystyle{\prod_{i=1}^{m} \mathbf{P}^{N_{i}}} $$
$$ \nu([x_{0}, \ldots, x_{n}]) = [\nu_{d_{1}}([x_{0}, \ldots, x_{n}]), \ldots, \nu_{d_{m}}([x_{0}, \ldots, x_{n}])]. $$
Let $ p_{i} : \mathbf{P} \longrightarrow {\mathbf{P}^{N_{i}}} $ be the $ i $-th projection and let $ h_{i} = c_{1}(p_{i}^{*}(\mathcal{O}_{\mathbf{P}^{N_{i}}}(1))) $. By means of $ \nu $, we can associate to the multi-degree arrangement $ \mathcal{D} $ an arrangement $ \mathcal{A} = \mathcal{A}_{1} \cup \ldots \cup \mathcal{A}_{m} $ 
on $ \mathbf{P} $ such that $ \mathcal{A}_{i} $ is an irreducible divisor of class $ h_{i} $ which is the pull-back via $ p_{i} $ of $ \mathcal{H}_{d_{i}} $. \\
Let us assume that $ \mathcal{A} $ has normal crossings and let $ \Omega_{\mathbf{P}}^{1}(\log \mathcal{A}) $ be its logarithmic bundle (see also \cite{BHM} for some results concerning logarithmic bundles over products of projective spaces). 
\begin{rem} 
The following property holds:
\begin{equation}\label{eq:scomposizione}
\Omega_{\mathbf{P}}^{1}(\log \mathcal{A}) \cong \displaystyle{\bigoplus_{i=1}^{m} p_{i}^{\ast}(\Omega_{\mathbf{P}^{N_{i}}}^{1}(\log \mathcal{H}_{d_{i}}))}. 
\end{equation}
Moreover, if $\ell_{i} \geq N_{i} + 2 $, $\mathcal{H}_{d_{i}} $ having normal crossings, $ \Omega_{\mathbf{P}^{N_{i}}}^{1}(\log \mathcal{H}_{d_{i}}) $ is a \emph{Steiner bundle} over $ \mathbf{P}^{N_{i}} $, \cite{Do-Ka}. So, because of (\ref{eq:scomposizione}), $\Omega_{\mathbf{P}}^{1}(\log \mathcal{A})$ admits the short exact sequence
\begin{equation}\label{eq:Steiner1}
0 \longrightarrow \displaystyle{\bigoplus_{i=1}^{m}}\mathcal{O}_{\mathbf{P}}(-h_{i})^{\ell_{i}-N_{i}-1} \longrightarrow \displaystyle{\bigoplus_{i=1}^{m}} \mathcal{O}_{\mathbf{P}}^{\ell_{i}-1} \longrightarrow \Omega_{\mathbf{P}}^{1}(\log \mathcal{A}) \longrightarrow 0.
\end{equation}
\end{rem}
Now we can state and prove the main result concerning the Torelli problem for multi-degree arrangements with \emph{many} components.

\begin{thm0}\label{multi-deg thm}
Let $ \mathcal{D} $ be a multi-degree arrangement with normal crossings on $ \mathbf{P}^n $ and let $ \mathcal{H}_{d_{1}}, \ldots, \mathcal{H}_{d_{m}} $, $ \mathcal{A} $ be the corresponding arrangements, respectively, on $ \mathbf{P}^{N_{1}}, \ldots, \mathbf{P}^{N_{m}} $ and $ \mathbf{P} $, in the sense of Veronese maps. \\Assume that, for all $ i \in \{1, \ldots, m\} $$:$\\
\noindent $1.$ $ \ell_{i} \geq N_{i}+ 4 $ \\
\noindent $2.$ $ \mathcal{A} $ has normal crossings on $ \mathbf{P} $ \\
\noindent $3.$ $ \mathcal{H}_{d_{i}} $ has normal crossings on $ \mathbf{P}^{N_{i}} $ and its hyperplanes don't osculate a rational normal curve of degree $ N_{i} $ in $ \mathbf{P}^{N_{i}} $.\\
Then $ \mathcal{D} = \{D \subset \mathbf{P}^{n} \, smooth \, irred. \, hypers. \, of \, degree\, d_{i}, \, \exists\, i \, | \, D \, satisfies \, (\ref{eq:hypunst}) \}. $
\end{thm0}

\begin{proof}
We perform a double inclusion argument between the two sets in the last line of the statement of Theorem \ref{multi-deg thm}. We observe that the inclusion $ \subset $ follows from Remark \ref{R:leftinclusion}. \\
\indent Thus, let us assume that $ D \subset \mathbf{P}^n $ is a smooth irreducible hypersurface of degree $ d_{i} $ which is unstable for $ \Omega_{\mathbf{P}^n}^{1}(\log \mathcal{D}) $, we want to prove that $ D \in \mathcal{D} $. \\
\indent First let us suppose that the degree of $ D $ is the highest one, i.e. $ d_{m} $. \\Our aim is to show that, denoting by $ H_{d_{m}} \subset \mathbf{P}^{N_{m}} $ the hyperplane associated to $ D $ by means of $ \nu_{d_{m}} $, then $ H = p^{*}_{m}(H_{d_{m}}) \subset \mathbf{P} $ satisfies
\begin{equation}
H^{0}(H,\Omega_{\mathbf{P}}^{1}(\log \mathcal{A})^{\vee}_{|_{H}}) \not=\{0\}.
\end{equation}
Indeed, if this is the case, $ H_{d_{m}} $ is an unstable hyperplane for $ \Omega_{\mathbf{P}^{N_{m}}}^{1}(\log \mathcal{H}_{d_{m}}) $ and so hypothesis $1.$ and $ 3. $ allow us to apply Theorem \ref{T:hyperplanes}, which implies that $ H_{d_{m}} \in \mathcal{H}_{d_{m}} $. In particular, we get that $ D = D_{j}^{d_{m}} \in \mathcal{D} $ for certain $ j \in \{1, \ldots, \ell_{m}\} $. \\
Let us denote by $ V $ the image of the map $ \nu $; since $ V $ is a non singular subvariety of $ \mathbf{P} $ which intersects transversally $ \mathcal{A} $, from Proposition $ 2.11 $ of~\cite{Do} we get the following exact sequence
\begin{equation}\label{eq:prop211Dolg}
0 \longrightarrow  \mathcal{N}_{V, \, \mathbf{P}}^{\vee} \longrightarrow \Omega_{\mathbf{P}}^{1}(\log \mathcal{A})_{|_{V} } \longrightarrow \Omega_{V}^{1}(\log \mathcal{A} \cap V) \longrightarrow 0 
\end{equation}
where $ \mathcal{N}_{V, \, \mathbf{P}}^{\vee} $ denotes the conormal sheaf of $ V $ in $ \mathbf{P} $. \\
We remark that $ V \cong \mathbf{P}^n $ and $ \mathcal{D} = \mathcal{A} \cap V $, so if we restrict (\ref{eq:prop211Dolg}) to $ D $, we apply $ \mathcal{H}om(\cdot, \, \mathcal{O}_{D}) $ and then we pass to cohomology we get
$$ 0 \longrightarrow H^{0}(D, {\Omega_{\mathbf{P}^n}^{1}(\log \mathcal{D})}^{\vee}_{|_{D}}) \longrightarrow H^{0}(D, {\Omega_{\mathbf{P}}^{1}(\log \mathcal{A})}^{\vee}_{|_{D}}). $$
Since $ D $ is unstable for $ \Omega_{\mathbf{P}^n}^{1}(\log \mathcal{D}) $, necessarily it has to be 
\begin{equation}\label{eq:keyfact1}
H^{0}(D, {\Omega_{\mathbf{P}}^{1}(\log \mathcal{A})}^{\vee}_{|_{D}}) \not= \{0\}.
\end{equation}
Now, let us tensor the ideal sheaf sequence of $ V $ in $ \mathbf{P} $ with $ {\Omega_{\mathbf{P}}^{1}(\log \mathcal{A})}^{\vee}_{|_{H}} $; we have the exact sequence
$$ 0 \longrightarrow \mathcal{I}_{V \cap H, \, H}\, \otimes \, {\Omega_{\mathbf{P}}^{1}(\log \mathcal{A})}^{\vee}_{|_{H}} \longrightarrow {\Omega_{\mathbf{P}}^{1}(\log \mathcal{A})}^{\vee}_{|_{H}} \longrightarrow {\Omega_{\mathbf{P}}^{1}(\log \mathcal{A})}^{\vee}_{|_{D}} \longrightarrow 0. $$
Passing to cohomology we get
$$ 0 \longrightarrow H^{0}(H, \mathcal{I}_{V \cap H, \, H} \otimes {\Omega_{\mathbf{P}}^{1}(\log \mathcal{A})}^{\vee}_{|_{H}}) \longrightarrow H^{0}(H, {\Omega_{\mathbf{P}}^{1}(\log \mathcal{A})}^{\vee}_{|_{H}}) \longrightarrow $$
$$ \,\,\,\,\,\, \longrightarrow H^{0}(D, {\Omega_{\mathbf{P}}^{1}(\log \mathcal{A})}^{\vee}_{|_{D}}) \longrightarrow H^{1}(H, \mathcal{I}_{V \cap H, \, H} \otimes \, {\Omega_{\mathbf{P}}^{1}(\log \mathcal{A})}^{\vee}_{|_{H}}). \quad\,\,\,\,  $$
We remark that to conclude the proof it suffices to show that 
\begin{equation}\label{eq:keyfact2}
H^{1}(H, \mathcal{I}_{V \cap H, \, H} \otimes {\Omega_{\mathbf{P}}^{1}(\log \mathcal{A})}^{\vee}_{|_{H}}) = \{0\}.
\end{equation}
Since hypothesis $1.$ and $ 3. $ hold, we are allowed to use (\ref{eq:Steiner1}), which, by applying $ \mathcal{H}om(\cdot, \, {\mathbf{P}}) $ turns out to be
$$ 0 \longrightarrow {\Omega_{\mathbf{P}}^{1}(\log \mathcal{A}})^{\vee} \longrightarrow \displaystyle{\bigoplus_{i=1}^{m}} \mathcal{O}_{{\mathbf{P}}}^{\ell_{i}-1} \longrightarrow \displaystyle{\bigoplus_{i=1}^{m}}\mathcal{O}_{\mathbf{P}}(h_{i})^{\ell_{i}-N_{i}-1} \longrightarrow 0. $$
If we tensor with $ \mathcal{I}_{V,\,\mathbf{P} \,|_{H}} $ and then we pass to cohomology, the previous sequence becomes 
\begin{equation}\label{eq:coomolSteiner1}
\cdots \longrightarrow \displaystyle{\bigoplus_{i=1}^{m}} H^{0}(H, \mathcal{I}_{V \cap H, \, H} \otimes \, {\mathcal{O}_{\mathbf{P}}(h_{i})_{|_{H}}^{\ell_{i}-N_{i}-1}}) \longrightarrow 
\end{equation}
$$ \,\,\, \longrightarrow H^{1}(H, \mathcal{I}_{V \cap H, \, H} \otimes \, {{\Omega_{\mathbf{P}}^{1}(\log \mathcal{A}})^{\vee}}_{|_{H}}) \longrightarrow \displaystyle{\bigoplus_{i=1}^{m}} H^{1}(H, \mathcal{I}_{V \cap H, \, H} \otimes \, {\mathcal{O}_{{\mathbf{P}}}^{\ell_{i}-1}}_{|_{H}}). $$ 
In order to prove (\ref{eq:keyfact2}) it suffices to show that
\begin{equation}\label{eq:0}
H^{1-k}(H, \mathcal{I}_{V \cap H, \, H}(kh_{i})) = \{0\}
\end{equation}
for $ k = \{0,1\} $ and for all $ i \in \{1, \ldots, m\} $. $ V \cap H $ being connected, from the induced cohomology sequence of the ideal sheaf sequence of $ V $ in $ \mathbf{P} $, restricted to $ H $, we immediately get (\ref{eq:0}) for $ k=0 $.\\
So, let us consider the exact commutative diagram:
$$ 
\begin{matrix}
{} & {} & 0 & {} & 0 & {} & 0 & {} & {} \cr
{} & {} & \downarrow & {} & \downarrow & {} & \downarrow & {} & {} \cr
0 & \rightarrow & \mathcal{I}_{V, \, \mathbf{P}}(h_{i}-h_{m}) & \rightarrow & \mathcal{O}_{\mathbf{P}}(h_{i}-h_{m}) & \rightarrow & \mathcal{O}_{\mathbf{P}^{n}}(d_{i}-d_{m}) & \rightarrow & 0 \cr
{} & {} & \downarrow & {} & \downarrow & {} & \downarrow & {} & {} \cr
0 & \rightarrow & \mathcal{I}_{V, \, \mathbf{P}}(h_{i}) & \rightarrow & \mathcal{O}_{\mathbf{P}}(h_{i}) & \rightarrow & \mathcal{O}_{\mathbf{P}^{n}}(d_{i}) & \rightarrow & 0 \cr
{} & {} & \downarrow & {} & \downarrow & {} & \downarrow & {} & {} \cr
0 & \rightarrow & \mathcal{I}_{V \cap H, \, H}(h_{i}) & \rightarrow & \mathcal{O}_{H}(h_{i}) & \rightarrow & \mathcal{O}_{V \cap H}(d_{i}) & \rightarrow & 0 \cr
{} & {} & \downarrow & {} & \downarrow & {} & \downarrow & {} & {} \cr
{} & {} & 0 & {} & 0 & {} & 0 & {} & {} \cr
\end{matrix}
$$
Since $ H^{0}(\mathbf{P},\mathcal{O}_{\mathbf{P}}(h_{i})) \rightarrow H^{0}(\mathbf{P}^{n},\mathcal{O}_{\mathbf{P}^{n}}(d_{i})) $ is an isomorphism, we always get
\begin{equation}\label{eq:1}
H^{0}(\mathbf{P}, \mathcal{I}_{V, \, \mathbf{P}}(h_{i})) = H^{1}(\mathbf{P},\mathcal{I}_{V, \, \mathbf{P}}(h_{i})) = \{0\}. 
\end{equation}
Moreover, looking at the first row of the diagram, we obtain, for all $ i $, 
\begin{equation}\label{eq:2}
H^{1}(\mathbf{P},\mathcal{I}_{V, \, \mathbf{P}}(h_{i}-h_{m})) = \{0\}
\end{equation}
By using (\ref{eq:1}) and (\ref{eq:2}), the first column of the diagram implies (\ref{eq:0}) for $ k=1 $, as desired. \\
\indent Now, let us suppose that $ D $ has degree $ d_{i} $ with $ i \in \{m-1, m-2, \ldots, 1 \} $. In order to prove that $ D \in \mathcal{D} $, we apply a \emph{reduction technique} to $ \Omega_{\mathbf{P}^n}^{1}(\log \mathcal{D}) $ and to the hypersurfaces of $ \mathcal{D} $ of highest degree $ d_{m} $. Let's start with $ D_{\ell_{m}}^{d_{m}} $: since for this hypersurface (\ref{eq:hypunst}) holds, there exists a non-zero surjective homomorphism
$$ \Omega_{\mathbf{P}^n}^{1}(\log \mathcal{D})_{|_{D_{\ell_{m}}^{d_{m}}}} \longrightarrow \, \mathcal{O}_{D_{\ell_{m}}^{d_{m}}}, $$
which induces a surjective composed homomorphism $ g_{\ell_{m}} $
$$ \Omega_{\mathbf{P}^n}^{1}(\log \mathcal{D}) \longrightarrow \Omega_{\mathbf{P}^n}^{1}(\log \mathcal{D})_{|_{D_{\ell{m}}^{d_{m}}}} \longrightarrow \, \mathcal{O}_{D_{\ell{m}}^{d_{m}}}. $$
Its kernel, denoted by $ K_{\ell_{m}}^{d_{m}} $, turns out to be a rank-$ n $ vector bundle over $ \mathbf{P}^{n} $. \\
If we apply the \emph{snake lemma} to the commutative diagram 
$$ 0 \longrightarrow \displaystyle{\bigoplus_{i=1}^{m} \mathcal{O}_{\mathbf{P}^n}(-d_{i})^{\ell_{i}}} \longrightarrow \mathcal{O}_{\mathbf{P}^{n}}(-1)^{n+1} \, \oplus \,  \mathcal{O}_{\mathbf{P}^{n}}^{\left( \sum_{i=1}^{m}\ell_{i} \right)-1} \longrightarrow \Omega_{\mathbf{P}^n}^{1}(\log \mathcal{D}) \longrightarrow 0  $$
$$ \quad\,\,\,\,\,\,\,\,\,\,\,\,\,\,\,\,\,\,\,\,\,\,\,\,\,\,\,\,\,\,\, \downarrow \,\,\,\,\,\,\,\,\,\,\,\,\,\,\,\,\,\,\,\,\,\,\,\,\,\,\,\,\,\,\,\,\,\,\,\,\,\,\,\,\,\,\,\,\,\,\,\,\,\,\,\,\,\,\,\,\,\,\,\,\,\,\,\,\, \downarrow \,\,\,\,\,\,\,\,\,\,\,\,\,\,\,\,\,\,\,\,\,\,\,\,\,\,\,\,\,\,\,\,\,\,\,\,\,\,\,\,\,\,\,\,\,\,\,\,\,\,\,\,\,\,\,\,\,\, \downarrow g_{\ell_{m}} \quad\quad\quad\quad   $$
$$ 0 \longrightarrow \quad \mathcal{O}_{\mathbf{P}^{n}}(-d_{m}) \,\,\,\,\,\, \longrightarrow \,\,\,\,\,\,\,\,\,\,\,\,\,\,\,\,\,\,\,\,\,\,\,\,\,\,\,\,\,\, \mathcal{O}_{\mathbf{P}^{n}} \,\,\,\,\,\,\,\,\,\,\,\,\,\,\,\,\,\,\,\,\,\,\,\,\,\,\,\,\,\, \longrightarrow \,\,\,\,\,\,\,\,\,\, \mathcal{O}_{D_{\ell_{m}}^{d_{m}}} \,\,\, \longrightarrow 0 \quad\quad\quad\quad\quad\quad\quad\quad\quad\quad\quad\quad\quad $$
we get that $ K_{\ell_{m}}^{d_{m}} $ admits the short exact sequence
$$ 0 \longrightarrow \displaystyle{\bigoplus_{i=1}^{m-1} \mathcal{O}_{\mathbf{P}^n}(-d_{i})^{\ell_{i}}} \, \oplus \,  \mathcal{O}_{\mathbf{P}^{n}}(-d_{m})^{\ell_{m}-1} \buildrel \rm \it{M}_{\ell_{m}} \over \longrightarrow \quad\quad\quad\quad\quad\quad $$
$$ \buildrel \rm \it{M}_{\ell_{m}} \over \longrightarrow \mathcal{O}_{\mathbf{P}^{n}}(-1)^{n+1} \, \oplus \, \mathcal{O}_{\mathbf{P}^{n}}^{\left( \sum_{i=1}^{m-1}\ell_{i} \right)+(\ell_{m}-1)-1} \longrightarrow K_{\ell_{m}}^{d_{m}} \longrightarrow 0 $$
where $ M_{\ell_{m}} $ is the $ \left[n+\left( \sum_{i=1}^{m-1}\ell_{i} \right)+(\ell_{m}-1) \right] \times \left[ \left( \sum_{i=1}^{m-1}\ell_{i} \right)+(\ell_{m}-1) \right] $ matrix obtained from the transpose of the matrix in (\ref{eq:Anconas}) by removing the last column and row. So we have that 
$$ K_{\ell_{m}}^{d_{m}} \cong \Omega_{\mathbf{P}^n}^{1}(\log \{D_{1}^{d_{1}}, \ldots, D^{d_{1}}_{\ell_{1}}, D_{1}^{d_{2}}, \ldots, D^{d_{2}}_{\ell_{2}}, \ldots \ldots, D_{1}^{d_{m}}, \ldots, D^{d_{m}}_{\ell_{m}-1}\}), $$
i.e. $ K_{\ell_{m}}^{d_{m}} $ is the logarithmic bundle associated to $ \mathcal{D} - \{D^{d_{m}}_{\ell_{m}}\}. $
In particular, $ D $ satisfies the condition 
\begin{equation}\label{eq:instKm}
H^{0}(D, {{K_{\ell_{m}}^{d_{m}}}^{\vee}}_{|_{D}} ) \not= \{0\}, 
\end{equation}
that is $ D $ is unstable for $ K_{\ell_{m}}^{d_{m}}. $
Indeed, if we apply $ \mathcal{H}om(\cdot, \mathcal{O}_{\mathbf{P}^{n}}) $ to the short exact sequence
$$ 0 \longrightarrow K_{\ell_{m}}^{d_{m}} \longrightarrow \Omega_{\mathbf{P}^n}^{1}(\log \mathcal{D}) \xrightarrow{\emph{g}_{\ell_{\emph{m}}}} \mathcal{O}_{D_{\ell_{m}}^{d_{m}}} \longrightarrow 0 $$
we get 
\begin{equation}\label{eq:sequence}
0 \longrightarrow \Omega_{\mathbf{P}^n}^{1}(\log \mathcal{D})^{\vee} \longrightarrow {K_{\ell_{m}}^{d_{m}}}^{\vee} \longrightarrow \mathcal{O}_{D_{\ell_{m}}^{d_{m}}}(d_{m}) \longrightarrow 0. 
\end{equation}
So, if we restrict (\ref{eq:sequence}) to $ D $ and then consider the induced cohomology sequence, we obtain an injective map
$$ H^{0}(D, \Omega_{\mathbf{P}^n}^{1}(\log \mathcal{D})^{\vee}_{|_{D}}) \longrightarrow H^{0}(D, {{K_{\ell_{m}}^{d_{m}}}^{\vee}}_{|_{D}}), $$
which implies (\ref{eq:instKm}). \\
Now, starting from $ K_{\ell_{m}}^{d_{m}} $, we iterate this technique for $ D^{d_{m}}_{\ell_{m}-1}, D^{d_{m}}_{\ell_{m}-2}, \ldots, D^{d_{m}}_{1} $ and we get a sequence of rank-$ n $ vector bundles $ K_{\ell_{m}-1}^{d_{m}}, K_{\ell_{m}-2}^{d_{m}}, \ldots, K_{1}^{d_{m}} $ over $ \mathbf{P}^{n} $ such that, for all $ s \in \{1,\ldots, \ell_{m}-1 \} $,
$$ 0 \longrightarrow K_{\ell_{m}-s}^{d_{m}} \longrightarrow K_{\ell_{m}-(s-1)}^{d_{m}} \xrightarrow{\emph{g}_{\ell_{\emph{m}}-\emph{s}}} \mathcal{O}_{D^{d_{m}}_{\ell_{m}-s}} \longrightarrow 0 $$
is a short exact sequence and $$ K_{\ell_{m}-s}^{d_{m}} \cong \Omega_{\mathbf{P}^n}^{1}(\log \{D_{1}^{d_{1}}, \ldots, D_{\ell_{1}}^{d_{1}}, \ldots \ldots, D_{1}^{d_{m}}, \ldots, D^{d_{m}}_{\ell_{m}-(s+1)}\}). $$
In particular
$$ K_{1}^{d_{m}} \cong \Omega_{\mathbf{P}^n}^{1}(\log \{D_{1}^{d_{1}}, \ldots, D_{\ell_{1}}^{d_{1}}, \ldots \ldots, D_{1}^{d_{m-1}}, \ldots, D^{d_{m-1}}_{\ell_{m-1}}\}) $$
and the smooth irreducible hypersurface $ D $ of degree $ d_{i} $ is unstable for $ K_{1}^{d_{m}} $. \\
If $ i = m-1 $, then $ D $ is a hypersurface of highest degree in the arrangement $ \mathcal{D}-\{D_{1}^{d_{m}}, \ldots, D_{\ell_{m}}^{d_{m}}\} $ and so, by repeating the computations of the first case of this proof, we get that there exists $ j \in \{1, \ldots, \ell_{m-1}\} $ such that $ D = D_{j}^{d_{m-1}} $. \\
If $ i = m-2 $, we apply the reduction technique to $ K_{1}^{d_{m}} $ and to the hypersurfaces $ \{D_{\ell_{m-1}}^{d_{m-1}}, \ldots, D_{1}^{d_{m-1}}\} $ and so on. \\
If $ i = 1 $, with this method $ D $ turns out to be unstable for the logarithmic bundle $ \Omega_{\mathbf{P}^n}^{1}(\log \{D_{1}^{d_{1}}, \ldots, D_{\ell_{1}}^{d_{1}}\}) $ and so, from Theorem \ref{T:samedegree} it follows that there exists $ j \in \{1, \ldots, \ell_{1}\} $ such that $ D = D^{d_{1}}_{j} $, which concludes the proof.
\end{proof}
We have the following:
\begin{cor}
If $ \ell_{i} \geq {{n+d_{i}} \choose {d_{i}}} + 3 $, for all $ i \in \{1, \ldots, m\} $, then the map 
$$ \mathcal{D} = \{ D_{1}^{d_{1}}, \ldots, D^{d_{1}}_{\ell_{1}}, \ldots \ldots, D_{1}^{d_{m}}, \ldots, D^{d_{m}}_{\ell_{m}} \} \longrightarrow \Omega_{\mathbf{P}^n}^{1}(\log \mathcal{D})  $$
is generically injective.
\end{cor}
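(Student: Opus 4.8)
The plan is to deduce the statement directly from Theorem~\ref{multi-deg thm}, by showing that its three hypotheses are met for a generic member of the domain. Let $\mathcal{P}$ denote the parameter space of multi-degree arrangements with normal crossings on $\mathbf{P}^n$ having exactly $\ell_i$ components of degree $d_i$ for each $i$; this is an irreducible family, and ``generically injective'' means that the map $\mathcal{D} \mapsto \Omega_{\mathbf{P}^n}^{1}(\log \mathcal{D})$ is injective on a dense open subset $U \subseteq \mathcal{P}$. First I would observe that the numerical hypothesis of the corollary is exactly hypothesis $1$ of the theorem: since $N_i = \binom{n+d_i}{d_i}-1$, the inequality $\ell_i \geq \binom{n+d_i}{d_i}+3$ is the same as $\ell_i \geq N_i+4$. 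It remains to produce the dense open $U$ on which hypotheses $2$ and $3$ also hold.

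To this end I would treat each remaining condition as a Zariski-open, non-empty one. The assignment sending a degree-$d_i$ hypersurface $D=\{f=0\}$ to the hyperplane of $\mathbf{P}^{N_i}$ with the same coefficients as $f$ identifies the full linear system of degree-$d_i$ hypersurfaces on $\mathbf{P}^n$ with the complete system of hyperplanes of $\mathbf{P}^{N_i}$; hence a generic choice of the components $D_1^{d_i}, \ldots, D_{\ell_i}^{d_i}$ yields $\ell_i$ hyperplanes in general position, so $\mathcal{H}_{d_i}$ has normal crossings on $\mathbf{P}^{N_i}$, and consequently $\mathcal{A} = \bigcup_i p_i^{*}(\mathcal{H}_{d_i})$ has normal crossings on $\mathbf{P} = \prod_i \mathbf{P}^{N_i}$; each of these is an open condition whose complement is a proper closed subset. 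For the non-osculation clause of hypothesis $3$ I would pass to the dual projective space, where osculating a rational normal curve $\mathcal{C}_{N_i}$ amounts to the dual points all lying on the dual curve $\mathcal{C}_{N_i}^{\vee}$, again a rational normal curve of degree $N_i$. Since $N_i+3$ general points of a $\mathbf{P}^{N_i}$ lie on a \emph{unique} rational normal curve of degree $N_i$, while $N_i+4$ or more general points do not lie on any such curve, the hypothesis $\ell_i \geq N_i+4$ guarantees that the hyperplanes of a generic $\mathcal{H}_{d_i}$ do not osculate a degree-$N_i$ rational normal curve. Intersecting these finitely many dense opens over $i$ produces $U$.

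Finally I would turn Theorem~\ref{multi-deg thm} into injectivity on $U$. For $\mathcal{D} \in U$ all three hypotheses hold, so the theorem identifies $\mathcal{D}$ with the set of smooth irreducible degree-$d_i$ hypersurfaces $D$ satisfying the instability condition $H^{0}(D, {\Omega_{\mathbf{P}^n}^{1}(\log \mathcal{D})}^{\vee}_{|_{D}}) \neq \{0\}$ of $(\ref{eq:hypunst})$. This set depends only on the isomorphism class of $\Omega_{\mathbf{P}^n}^{1}(\log \mathcal{D})$. Hence, if $\mathcal{D}, \mathcal{D}' \in U$ satisfy $\Omega_{\mathbf{P}^n}^{1}(\log \mathcal{D}) \cong \Omega_{\mathbf{P}^n}^{1}(\log \mathcal{D}')$, the two arrangements have the same unstable hypersurfaces in every degree, whence $\mathcal{D} = \mathcal{D}'$, which is exactly the asserted generic injectivity. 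The main obstacle I anticipate is the non-osculation estimate: one must argue precisely that $\ell_i \geq N_i+4$ (and not merely $N_i+3$) forces the generic configuration off the osculating hyperplanes of every degree-$N_i$ rational normal curve, and this is the only place where the quantitative hypothesis is used beyond securing hypothesis $1$.
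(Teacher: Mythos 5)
Your proposal is correct and follows exactly the route the paper intends: the corollary is stated as an immediate consequence of Theorem~\ref{multi-deg thm}, the point being that hypothesis $1$ is a restatement of the numerical bound (since $N_i=\binom{n+d_i}{d_i}-1$) and hypotheses $2$ and $3$ hold on a dense open subset of the parameter space, so that the theorem recovers $\mathcal{D}$ as the set of unstable hypersurfaces, an invariant of the isomorphism class of $\Omega_{\mathbf{P}^n}^{1}(\log \mathcal{D})$. Your filling-in of the genericity details --- the identification of degree-$d_i$ forms with arbitrary hyperplanes of $\mathbf{P}^{N_i}$ via the complete Veronese system, and the classical fact that $N_i+3$ general points lie on a unique rational normal curve while $N_i+4$ lie on none --- is precisely the argument the paper leaves implicit, and correctly pinpoints where $\ell_i \geq N_i+4$ is needed.
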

\begin{rem}
Hypothesis $ 1. $ of Theorem \ref{multi-deg thm} implies (\ref{eq:inequality}). 
\end{rem}
\begin{rem}
We don't know if we can state a Torelli type theorem like \ref{multi-deg thm} without assuming 2. and 3. 
\end{rem}
In the case of arrangements with lines and conics in the projective plane, that is $ d_{1} = 1 $ and $ d_{2} = n = 2 $, hypothesis 1. of Theorem \ref{multi-deg thm} translates to $ \ell_{1} \geq 6 $ and $ \ell_{2} \geq 9 $.  In the next three sections we describe this kind of arrangements when $ \ell_{1} \in \{1,2,3\} $ and $  \ell_{2} = 1 $.

\section{A conic and a line}

Let $ \mathcal{D} = \{L, C\} $ be an arrangement with normal crossings in $ \mathbf{P}^2 $ consisting of a line $ L $ and a conic $ C $. Without loss of generality, we can assume $ L = \{ f_{1} = 0\}$ and $ C = \{f_{2} = 0 \} $, with $ f_{1}= x_{0} $ and $ f_{2} = { \sum_{i, j = 0}^{2}a_{i j} x_{i} x_{j}}, \,\, (a_{ij})_{0\leq i,j \leq 2} \in GL(2,\mathbf{C})  $, so that, by applying Gaussian elimination to the matrix of (\ref{eq:Anconas}), we can get the minimal resolution for $ \Omega_{\mathbf{P}^2}^{1}(\log \mathcal{D}) $
\begin{equation}\label{eq:minresline-conic}
0 \longrightarrow \mathcal{O}_{\mathbf{P}^2}(-2)  \buildrel \rm \it{M} \over \longrightarrow \mathcal{O}_{\mathbf{P}^2}(-1)^{2} \oplus \mathcal{O}_{\mathbf{P}^2} \longrightarrow \Omega_{\mathbf{P}^2}^{1}(\log \mathcal{D}) \longrightarrow 0
\end{equation}
with 
$$ M = \begin{pmatrix} 
2 \,\partial_{1}f_{2}  \cr 
2 \, \partial_{2}f_{2}  \cr 
-2 \, x_{0} \, \partial_{0}f_{2} \cr
\end{pmatrix}. $$
As a consequence we get that $ c_{1}(\Omega_{\mathbf{P}^2}^{1}(\log \mathcal{D})) = 0 $, $ c_{2}(\Omega_{\mathbf{P}^2}^{1}(\log \mathcal{D})) = 1 $ and, according to the Bohnhorst-Spindler criterion $($\cite{BS}$)$, that $ \Omega_{\mathbf{P}^2}^{1}(\log \mathcal{D}) $ is a semistable vector bundle over $ \mathbf{P}^2 $. 
\begin{thm0}\label{T:parM201}
Let $ \mathbf{M}_{\mathbf{P}^2}^{ss}(0,1) $ be the family of semistable rank $ 2 $ vector bundles $ E $  over $ \mathbf{P}^2 $ with minimal resolution 
$$ 0 \longrightarrow \mathcal{O}_{\mathbf{P}^2}(-2)  \xrightarrow{^{t}\begin{pmatrix} \ell_{1} & \ell_{2} & q \cr \end{pmatrix}} \mathcal{O}_{\mathbf{P}^2}(-1)^{2} \oplus \mathcal{O}_{\mathbf{P}^2} \longrightarrow E \longrightarrow 0 $$
where $ \ell_{1}, \ell_{2} \in H^{0}(\mathbf{P}^2, \mathcal{O}_{\mathbf{P}^2}(1)) $ and $ q \in H^{0}(\mathbf{P}^2, \mathcal{O}_{\mathbf{P}^n}(2)) $. Then the map
$$ \mathbf{M}_{\mathbf{P}^2}^{ss}(0,1) \buildrel \rm \pi_{2} \over \longrightarrow \mathbf{P}^2 $$
$$ E \longmapsto \{\ell_{1} = 0\} \cap \{\ell_{2} = 0\} $$
is an isomorphism. 
\end{thm0}
\begin{proof}
Let $ E $ and $ E' $ be two elements of  $ \mathbf{M}_{\mathbf{P}^2}^{ss}(0,1) $, defined, respectively, by $ \ell_{1}, \ell_{2}, q $ and $ \ell'_{1}, \ell'_{2}, q' $, as in the statement of Theorem \ref{T:parM201}. We have to prove that the intersection point of $ \ell_{1} $ and $ \ell_{2} $ coincides with the one of $ \ell'_{1} $ and $ \ell'_{2} $ if and only if $ E \cong E' $.  If the intersection point is the same, without loss of generality we can assume that $ \ell_{1} = \ell'_{1} = x_{0} $ and $ \ell_{2} = \ell'_{2} = x_{1} $. We remark that, for all $ x \in\mathbf{P}^2 $, $ E_{x} $ and $ E'_{x} $ are the cokernels of two rank $ 1 $ maps, in particular if $ x = [0,0,1] $ then $ q $ and $  q' $ have to contain the term $ x_{2}^{2} $. Thus, $ E \cong E' $ if and only if there exist $ g_{1}, g_{2} \in H^{0}(\mathbf{P}^2, \mathcal{O}_{\mathbf{P}^2}(1)) $ such that the following diagram commutes
$$ \mathcal{O}_{\mathbf{P}^2}(-2)  \xrightarrow{^{t}\begin{pmatrix} x_{0} & x_{1} & q \cr \end{pmatrix}} \mathcal{O}_{\mathbf{P}^2}(-1)^{2} \oplus \mathcal{O}_{\mathbf{P}^2} $$
$$ \quad\quad\quad\quad \begin{pmatrix} 1 \cr \end{pmatrix} \Big\downarrow \quad\quad\quad\quad\quad\quad\quad\quad\quad\quad\quad\quad \Big\downarrow \begin{pmatrix} 1 & 0 & 0 \cr 0 & 1 & 0 \cr g_{1} & g_{2} & 1 \cr  \end{pmatrix} $$
$$ \mathcal{O}_{\mathbf{P}^2}(-2)  \xrightarrow{^{t}\begin{pmatrix} x_{0} & x_{1} & q' \cr \end{pmatrix}} \mathcal{O}_{\mathbf{P}^2}(-1)^{2} \oplus \mathcal{O}_{\mathbf{P}^2} $$
which is equivalent to say that 
\begin{equation}\label{eq:lin}
q'- q = g_{1}x_{0}+g_{2}x_{1}. 
\end{equation}
Assume that 
$$ q = b_{00}x_{0}^{2}+b_{01}x_{0}x_{1}+b_{02}x_{0}x_{2}+b_{11}x_{1}^2+b_{12}x_{1}x_{2}+x_{2}^{2} $$
$$ q' = b'_{00}x_{0}^{2}+b'_{01}x_{0}x_{1}+b'_{02}x_{0}x_{2}+b'_{11}x_{1}^2+b'_{12}x_{1}x_{2}+x_{2}^{2}. $$
By using the identity principle for polynomials, we immediately get that
$$ g_{1} = (b'_{00}-b_{00})x_{0}+(b'_{01}-b_{01}-1)x_{1}+(b'_{02}-b_{02})x_{2} $$
$$ g_{2} = x_{0}+(b'_{11}-b_{11}-1)x_{1}+(b'_{12}-b_{12})x_{2} $$
solve (\ref{eq:lin}), which concludes the proof.
\end{proof}
\begin{rem}
Each $ E \in \mathbf{M}_{\mathbf{P}^2}^{ss}(0,1) $ is logarithmic for a line and a conic.
\end{rem}
\begin{rem}
Theorem \ref{T:parM201} asserts that $ \Omega_{\mathbf{P}^2}^{1}(\log \mathcal{D}) $ lives in $2$-dimensional space, while the number of parameters associated to a line and a conic with normal crossings is $ 7 $. So we can immediately conclude that arrangements like these are not of Torelli type.
\end{rem}
\noindent With the aid of the description given in Theorem \ref{T:parM201} and with the same notation as in the beginning of this section, we get the following result.
\begin{prop}
The point in $ \mathbf{P}^2 $ corresponding to $ \Omega_{\mathbf{P}^2}^{1}(\log \mathcal{D}) $ by means of $ \pi_{2} $ is the pole of the line $ L $ with respect to the conic $ C $.
\end{prop}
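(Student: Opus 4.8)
The plan is to read the two linear forms of the minimal resolution (\ref{eq:minresline-conic}) directly off the matrix $M$ and then apply Theorem \ref{T:parM201}. From the displayed shape of $M$, the two linear entries are $\ell_{1} = 2\,\partial_{1}f_{2}$ and $\ell_{2} = 2\,\partial_{2}f_{2}$ (both linear, since $f_{2}$ is quadratic), while the quadratic entry is $q = -2\,x_{0}\,\partial_{0}f_{2}$. Hence, by the description of $\pi_{2}$, the point of $\mathbf{P}^{2}$ associated to $\Omega_{\mathbf{P}^{2}}^{1}(\log \mathcal{D})$ is
$$ P = \{\partial_{1}f_{2} = 0\} \cap \{\partial_{2}f_{2} = 0\}, $$
and it remains only to identify $P$ with the pole of $L$ with respect to $C$.

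Next I would bring in the classical polarity of the smooth conic $C$. Writing $A = (a_{ij})$ for the symmetric matrix of the quadratic form $f_{2}$, so that $f_{2} = {}^{t}x\,A\,x$ and the gradient is $(\partial_{0}f_{2}, \partial_{1}f_{2}, \partial_{2}f_{2}) = 2\,A\,x$, the pole--polar correspondence attached to $C$ is the projective isomorphism $\mathbf{P}^{2} \to (\mathbf{P}^{2})^{\vee}$ induced by $A$; in particular the pole of a line $\{{}^{t}v\,x = 0\}$ is the point $A^{-1}v$. Since $L = \{x_{0} = 0\}$ corresponds to the covector $v = {}^{t}(1,0,0)$, the pole of $L$ with respect to $C$ is $A^{-1}\,{}^{t}(1,0,0)$, i.e.\ the first column of $A^{-1}$.

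The final step is the matching computation. By the gradient formula, the conditions $\partial_{1}f_{2}(P) = \partial_{2}f_{2}(P) = 0$ defining $P$ are equivalent to $(AP)_{1} = (AP)_{2} = 0$, i.e.\ to $AP$ being proportional to ${}^{t}(1,0,0)$, which is exactly the assertion $P = A^{-1}\,{}^{t}(1,0,0)$ up to scalar. Thus $P$ is the pole of $L$ with respect to $C$, as claimed. The only point requiring a little care --- rather than a genuine obstacle --- is well-definedness: one must verify that $\partial_{1}f_{2}$ and $\partial_{2}f_{2}$ are linearly independent, so that $P$ is a single honest point, and that $AP \neq 0$, so that $AP$ is really a nonzero multiple of ${}^{t}(1,0,0)$. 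Both follow immediately from the smoothness of $C$, which forces $A \in GL(3,\mathbf{C})$: the coefficient vectors of $\partial_{1}f_{2}$ and $\partial_{2}f_{2}$ are twice the second and third columns of $A$ and so are independent, and an invertible $A$ cannot send the nonzero vector $P$ to $0$.
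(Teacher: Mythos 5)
Your proposal is correct and follows essentially the same route as the paper: both read the linear entries $2\,\partial_{1}f_{2}$, $2\,\partial_{2}f_{2}$ off the matrix $M$ of (\ref{eq:minresline-conic}), identify the $\pi_{2}$-image as their common zero $P$, and match $P$ with the pole of $L$ via the symmetric matrix of $C$. The only difference is cosmetic: the paper computes $P$ explicitly by Cramer's rule and then verifies that its polar line reduces to $\{x_{0}=0\}$, whereas you phrase the same verification as $AP \propto {}^{t}(1,0,0)$, i.e.\ $P = A^{-1}\,{}^{t}(1,0,0)$ --- identical content with slightly cleaner bookkeeping, plus the (welcome) explicit check of well-definedness from the invertibility of $A$.
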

\begin{proof}
By applying Cramer's rule we get that the point in $ \mathbb{P}^{2} $ satisfying 
$$ \displaystyle{ \sum_{j = 0}^{2}a_{1 j} x_{j} } = \displaystyle{ \sum_{j = 0}^{2}a_{2 j} x_{j} } = 0 $$
is $ P = [ a_{12}^{2}-a_{11}a_{22}, a_{22}a_{01}-a_{02}a_{12}, a_{02}a_{11}-a_{12}a_{01} ] $. 
The polar line of $ P $ with respect to $ C $ is given by
$$ (a_{12}^{2}-a_{11}a_{22}, a_{22}a_{01}-a_{02}a_{12}, a_{02}a_{11}-a_{12}a_{01}) \begin{pmatrix} a_{00} & a_{01} & a_{02} \cr a_{01} & a_{11} & a_{12} \cr a_{02} & a_{12} & a_{22} \cr \end{pmatrix} \begin{pmatrix} x_{0} \cr x_{1} \cr x_{2} \cr \end{pmatrix} = 0  $$
which reduces to $ x_{0} = 0 $, that is to $ L $, as desired.
\end{proof}
\noindent We immediately get the following:
\begin{cor}\label{C:line-conic}
Let $ \mathcal{D} = \{L, C\} $ and $ \mathcal{D'} = \{L', C'\} $ be arrangements with normal crossings in $ \mathbf{P}^2 $ given by a line and a conic. Then 
$$ \Omega_{\mathbf{P}^2}^{1}(\log \mathcal{D}) \cong \Omega_{\mathbf{P}^2}^{1}(\log \mathcal{D'}) $$
if and only if the pole of $ L $ with respect to $ C $ coincides with the pole of $ L' $ with respect to $ C' $.
\end{cor}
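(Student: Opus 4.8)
The plan is to deduce the statement as a formal consequence of Theorem \ref{T:parM201} and the preceding Proposition, which between them already pin down the single point of $ \mathbf{P}^2 $ that governs the bundle. First I would record that both bundles under comparison genuinely belong to the family $ \mathbf{M}_{\mathbf{P}^2}^{ss}(0,1) $. This is exactly the content of the minimal resolution (\ref{eq:minresline-conic}): it presents $ \Omega_{\mathbf{P}^2}^{1}(\log \{L,C\}) $ in the shape required in the statement of Theorem \ref{T:parM201}, with $ \ell_{1} = 2\,\partial_{1}f_{2} $, $ \ell_{2} = 2\,\partial_{2}f_{2} $ and $ q = -2\,x_{0}\,\partial_{0}f_{2} $, and since $ C $ is smooth with normal crossings this holds for every such arrangement, with no genericity restriction. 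Consequently $ \pi_{2} $ is defined on each of $ \Omega_{\mathbf{P}^2}^{1}(\log \mathcal{D}) $ and $ \Omega_{\mathbf{P}^2}^{1}(\log \mathcal{D}') $.

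Next I would invoke Theorem \ref{T:parM201}, which asserts that $ \pi_{2} : \mathbf{M}_{\mathbf{P}^2}^{ss}(0,1) \to \mathbf{P}^2 $ is an isomorphism; in particular it is a bijection on isomorphism classes of bundles, so that the image point does not depend on the chosen presentation. Applied to our two bundles this gives
$$ \Omega_{\mathbf{P}^2}^{1}(\log \mathcal{D}) \cong \Omega_{\mathbf{P}^2}^{1}(\log \mathcal{D}') \quad\Longleftrightarrow\quad \pi_{2}(\Omega_{\mathbf{P}^2}^{1}(\log \mathcal{D})) = \pi_{2}(\Omega_{\mathbf{P}^2}^{1}(\log \mathcal{D}')). $$
The Proposition then identifies each of the two points on the right: $ \pi_{2}(\Omega_{\mathbf{P}^2}^{1}(\log \mathcal{D})) $ is the pole of $ L $ with respect to $ C $, and likewise $ \pi_{2}(\Omega_{\mathbf{P}^2}^{1}(\log \mathcal{D}')) $ is the pole of $ L' $ with respect to $ C' $. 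Substituting these identifications into the displayed equivalence yields precisely the asserted statement.

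The one point that needs care — and which I regard as the only genuine step beyond quoting the two results — is that the Proposition was proved under the normalization $ f_{1} = x_{0} $, whereas the Corollary compares two arbitrary arrangements living in one fixed $ \mathbf{P}^2 $. I would resolve this by observing that both constructions are equivariant for the action of $ PGL_{3}(\mathbf{C}) $: the formation of the logarithmic bundle commutes with pull-back by a projectivity, the resolution type defining $ \mathbf{M}_{\mathbf{P}^2}^{ss}(0,1) $ is preserved, so that $ \pi_{2} $ is natural, while the pole of a line with respect to a conic is itself a projective invariant. Since the Proposition holds for one position of the line, equivariance upgrades it to an arbitrary line $ L $, so the identification that $ \pi_{2}(\Omega_{\mathbf{P}^2}^{1}(\log \{L,C\})) $ equals the pole of $ L $ with respect to $ C $ is valid in the single coordinate frame used to compare $ \mathcal{D} $ and $ \mathcal{D}' $. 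With this bookkeeping settled the argument is a formal chain of equivalences; all the substance already resides in Theorem \ref{T:parM201} and the Proposition, so no further computation is needed.
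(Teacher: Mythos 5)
Your proposal is correct and follows essentially the same route as the paper, which deduces the corollary immediately from Theorem \ref{T:parM201} and the preceding Proposition identifying $\pi_{2}(\Omega_{\mathbf{P}^2}^{1}(\log \mathcal{D}))$ with the pole of $L$ with respect to $C$. Your additional $PGL_{3}(\mathbf{C})$-equivariance argument, justifying that the Proposition's normalization $f_{1}=x_{0}$ entails no loss of generality when comparing two arrangements in a single coordinate frame, is a valid piece of bookkeeping that the paper leaves implicit.
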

\begin{figure}[h]
    \centering
    \includegraphics[width=65mm]{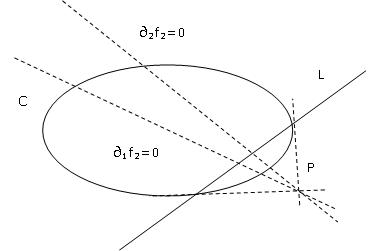}
    \caption{$ L $ is the polar line of $ P $ with respect to $ C $}
\end{figure}
\begin{rem}
These results can be extended in a natural way to the case of a multi-degree arrangement $ \mathcal{D} $ with normal crossings in $ \mathbf{P}^{n} $, $ n \geq 3 $, consisting of a hyperplane $ H $ and a smooth quadric $ Q $. In this setting $ \Omega_{\mathbf{P}^n}^{1}(\log \mathcal{D}) $ is no more semistable over $ \mathbf{P}^{n} $, but its isomorphism class is still described by the pole of $ H $ with respect to $ Q $, \cite{A}.  
\end{rem}

\section{A conic and two lines}

Let $ \mathcal{D} = \{L_{1}, L_{2}, C\} $ be an arrangement with normal crossings in $ \mathbf{P}^2 $, where $ L_{i} $, for $ i \in \{1,2\} $, is a line and $ C $ is a conic. We can assume that $ L_{1} = \{ f_{1} = 0 \} $, $ L_{2} = \{f_{2} = 0 \} $ and $ C = \{f_{3} = 0\} $ where $ f_{1} = x_{0} $, $ f_{2} = x_{1} $ and $ f_{3} = { \sum_{i, j = 0}^{2} a_{i j} x_{i} x_{j}}, \,\, (a_{ij})_{0\leq i,j \leq 2} \in GL(2,\mathbf{C}) $, so that, by means of (\ref{eq:Anconas}), $ \Omega_{\mathbf{P}^2}^{1}(\log \mathcal{D}) $ fits in the minimal resolution
\begin{equation}\label{eq:rescon2lines}
0 \longrightarrow \mathcal{O}_{\mathbf{P}^2}(-2) \buildrel \rm \it{M} \over \longrightarrow \mathcal{O}_{\mathbf{P}^2}(-1) \oplus \mathcal{O}_{\mathbf{P}^2}^{2} \longrightarrow \Omega_{\mathbf{P}^2}^{1}(\log \mathcal{D}) \longrightarrow 0
\end{equation}
where
$$  \it{M} = \begin{pmatrix}
2 \, \partial_{2}f_{3} \cr
-2 \, x_{0} \, \partial_{0}f_{3} \cr
-2 \, x_{1} \, \partial_{1}f_{3} \cr
\end{pmatrix}. $$
In particular, (\ref{eq:rescon2lines}) implies that the normalized bundle $ \Omega_{\mathbf{P}^2}^{1}(\log \mathcal{D})(-1) $ belongs to $ \mathbf{M}_{\mathbf{P}^2}(-1,2) $, the moduli space of rank $ 2 $ stable vector bundles over $ \mathbf{P}^2 $ with Chern classes $ c_{1} = -1 $ and $ c_{2} = 2 $. In the following result, which is likely to be known to experts, we give an interesting description of $ \mathbf{M}_{\mathbf{P}^2}(-1,2) $; in order to state it, we denote by $ \sigma_{2}(\nu_{2}(\mathbf{P}^2)) $ the $2$-secant variety of the image of the quadratic Veronese map $ \nu_{2} : \mathbf{P}^2 \longrightarrow \mathbf{P}^5 $.
\begin{thm0}\label{T:M(-1,2)}
$ \mathbf{M}_{\mathbf{P}^2}(-1,2) $ is isomorphic to $ \sigma_{2}(\nu_{2}(\mathbf{P}^2)) - \nu_{2}(\mathbf{P}^2) $, the projective space of symmetric matrices of order $ 3 $ and rank $ 2 $.
\end{thm0}
\begin{proof}
A vector bundle $ E $ lives in $ \mathbf{M}_{\mathbf{P}^2}(-1,2) $ if and only if it is endowed with a short exact sequence like
$$ 0 \longrightarrow \mathcal{O}_{\mathbf{P}^2}(-3) \xrightarrow{^{t}\begin{pmatrix} \ell_{1} & q_{1} & q_{2} \cr \end{pmatrix}} \mathcal{O}_{\mathbf{P}^2}(-2) \oplus \mathcal{O}_{\mathbf{P}^2}^{2}(-1) \longrightarrow E \longrightarrow 0 $$ 
where $ \ell_{1} \in H^{0}(\mathbf{P}^2, \mathcal{O}_{\mathbf{P}^2}(1)) $ and $ q_{1}, q_{2} \in H^{0}(\mathbf{P}^2, \mathcal{O}_{\mathbf{P}^2}(2)) $. \\
We note that $ E $ has a unique line $ L \subset \mathbf{P}^{2} $ such that $ H^{0}(L,E_{|_{L}}(-1)) \not= \{0\} $, known as \emph{jumping line} of $ E $, which is $ \{\ell_{1} = 0\} $. On this line, the linear series given by $ q_{1} $ and $ q_{2} $ has two distinct double points, which we denote by $ P_{1} $ and $ P_{2} $. Then the map given by 
$$ \mathbf{M}_{\mathbf{P}^2}(-1,2) \longrightarrow \sigma_{2}(\nu_{2}(\mathbf{P}^2)) - \nu_{2}(\mathbf{P}^2) $$
$$ E \longmapsto \{P_{1},P_{2}\} $$
is an isomorphism, which concludes the proof.
\end{proof}
\begin{rem}
Theorem \ref{T:M(-1,2)} implies that $ \Omega_{\mathbf{P}^2}^{1}(\log \mathcal{D})(-1) $ is characterized by $ 4 $ parameters, while $ \mathcal{D} $ needs $ 9 $ parameters to be described. So in this case $ \mathcal{D} $ is not a Torelli arrangement. 
\end{rem}
\begin{rem}\label{r:twopoints}
The jumping line of $ \Omega_{\mathbf{P}^2}^{1}(\log \mathcal{D}) $ is $ \{\partial_{2}f_{3} = 0\} $ and it is the polar line with respect to $ C $ of $  L_{1} \cap L_{2} = [0,0,1] $. Moreover, the linear series on this line is given by $ L_{1} \cup s_{2} $ and $ L_{2} \cup s_{1} $, where $ s_{2} $ is the polar line with respect to $ C $ of $ \{\partial_{2}f_{3} = 0\} \cap L_{2} = [a_{22}, 0, -a_{02}]$ and $ s_{1} $ is the polar line with respect to $ C $ of $ \{\partial_{2}f_{3} = 0\} \cap L_{1} = [0, a_{22}, -a_{12}] $, that is $  s_{2} = \{a_{22} \partial_{0}f_{3} - a_{02} \partial_{2}f_{3} = 0\} $ and $ s_{1} = \{a_{22} \partial_{1}f_{3} - a_{12} \partial_{2}f_{3} = 0\} $. The logarithmic bundle $ \Omega_{\mathbf{P}^2}^{1}(\log \mathcal{D}) $ corresponds to the two intersection points $ \{P_{1},P_{2}\} $ of $ C $ and $ \{\partial_{2}f_{3} = 0\} $. 
\end{rem}
\begin{figure}[h]
    \centering
    \includegraphics[width=100mm]{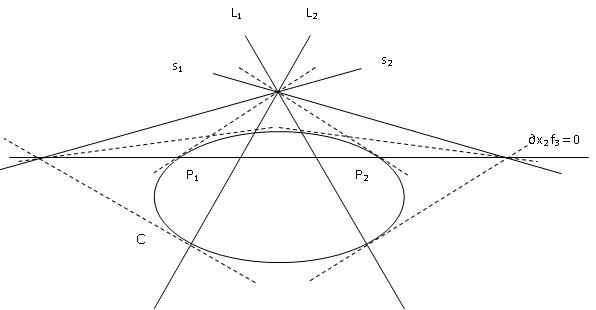}
    \caption{The points $ \{P_{1}, P_{2}\} $ associated to $ \Omega_{\mathbf{P}^2}^{1}(\log \mathcal{D}) $}
\end{figure}
\begin{cor}\label{C:2lines-conic}
Let $ \mathcal{D} = \{L_{1}, L_{2}, C\} $ and $ \mathcal{D'} = \{L'_{1}, L'_{2}, C'\} $ be arrangements with normal crossings in $ \mathbf{P}^2 $ consisting of two lines and a conic. Let $ \{P_{1},P_{2}\} $, resp. $ \{P'_{1},P'_{2}\} $, be the points in $ \mathbf{P}^2 $ associated to $ \Omega_{\mathbf{P}^2}^{1}(\log \mathcal{D}) $, resp. to $\Omega_{\mathbf{P}^2}^{1}(\log \mathcal{D'})$, in the sense of Remark \ref{r:twopoints}. Then 
$$ \Omega_{\mathbf{P}^2}^{1}(\log \mathcal{D}) \cong \Omega_{\mathbf{P}^2}^{1}(\log \mathcal{D'}) \Longleftrightarrow \{P_{1},P_{2}\} = \{P'_{1},P'_{2}\}. $$
\end{cor}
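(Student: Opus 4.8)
The plan is to derive the Corollary as a direct consequence of Theorem \ref{T:M(-1,2)} and Remark \ref{r:twopoints}, exploiting the fact that tensoring by a fixed line bundle is an equivalence on isomorphism classes of vector bundles over $ \mathbf{P}^2 $.

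First I would record that, by the minimal resolution (\ref{eq:rescon2lines}), the two normalized bundles $ E := \Omega_{\mathbf{P}^2}^{1}(\log \mathcal{D})(-1) $ and $ E' := \Omega_{\mathbf{P}^2}^{1}(\log \mathcal{D'})(-1) $ both lie in $ \mathbf{M}_{\mathbf{P}^2}(-1,2) $. Since the functor $ \,\cdot\, \otimes \mathcal{O}_{\mathbf{P}^2}(1) $ is an autoequivalence of the category of coherent sheaves, one has
$$ \Omega_{\mathbf{P}^2}^{1}(\log \mathcal{D}) \cong \Omega_{\mathbf{P}^2}^{1}(\log \mathcal{D'}) \Longleftrightarrow E \cong E'. $$
Then I would apply Theorem \ref{T:M(-1,2)}: the assignment that sends a bundle in $ \mathbf{M}_{\mathbf{P}^2}(-1,2) $ to the pair of double points cut out on its jumping line is an isomorphism onto $ \sigma_{2}(\nu_{2}(\mathbf{P}^2)) - \nu_{2}(\mathbf{P}^2) $; being in particular bijective on isomorphism classes, it gives $ E \cong E' $ if and only if the two corresponding pairs of points agree.

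It then remains only to match these pairs with the points $ \{P_{1},P_{2}\} $ and $ \{P'_{1},P'_{2}\} $ of the statement, and this is exactly the identification performed in Remark \ref{r:twopoints}: the jumping line of $ \Omega_{\mathbf{P}^2}^{1}(\log \mathcal{D}) $ is $ \{\partial_{2}f_{3}=0\} $, and the pair of double points associated to it is precisely $ C \cap \{\partial_{2}f_{3}=0\} = \{P_{1},P_{2}\} $ (and likewise for $ \mathcal{D'} $). Composing the two equivalences above yields $ \Omega_{\mathbf{P}^2}^{1}(\log \mathcal{D}) \cong \Omega_{\mathbf{P}^2}^{1}(\log \mathcal{D'}) $ if and only if $ \{P_{1},P_{2}\} = \{P'_{1},P'_{2}\} $, as claimed. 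Since Theorem \ref{T:M(-1,2)} and Remark \ref{r:twopoints} already supply all the substantive content, I do not expect any genuine obstacle beyond this bookkeeping; the only mild care needed is in the twist compatibility, which is immediate.
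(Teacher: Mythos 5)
Your proposal is correct and takes essentially the same route as the paper: there the corollary is stated as an immediate consequence of Theorem \ref{T:M(-1,2)} and Remark \ref{r:twopoints}, which is precisely the chain you spell out (twist-invariance of isomorphism, the moduli-space isomorphism $E \mapsto \{P_{1},P_{2}\}$, and the identification of the double points with $C \cap \{\partial_{2}f_{3}=0\}$). Your write-up merely makes explicit the bookkeeping the paper leaves implicit.
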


\section{A conic and three lines}

Let $ \mathcal{D} = \{L_{1},L_{2},L_{3}, C\} $ be an arrangement with normal crossings in $ \mathbf{P}^2 $ consisting of three lines and a conic, let us say that $ L_{1} = \{x_{0} = 0\} $, $ L_{2} = \{x_{1} =  0\} $, $ L_{3} = \{x_{2} =  0\} $ and $ C = \{f_{4} = 0\} $ where 
\begin{equation}\label{eq:f4}
f_{4} =  \displaystyle{ \sum_{i, j = 0}^{2}d_{i j} x_{i} x_{j}}, \,\, (d_{ij})_{0\leq i,j \leq 2} \in GL(2,\mathbf{C}) . 
\end{equation}  
\noindent In this case, starting from (\ref{eq:Anconas}), the minimal resolution for the logarithmic bundle turns out to be
\begin{equation}\label{eq:3lines-conic}
0 \longrightarrow \mathcal{O}_{\mathbf{P}^2}(-2) \buildrel \rm \it{M} \over \longrightarrow \mathcal{O}_{\mathbf{P}^2}^{3} \longrightarrow \Omega_{\mathbf{P}^2}^{1}(\log \mathcal{D}) \longrightarrow 0
\end{equation}
where
$$ \it{M} = \begin{pmatrix} 
-x_{0}\partial_{0}f_{4}  \cr 
-x_{1}\partial_{1}f_{4} \cr 
-x_{2}\partial_{2}f_{4} \cr 
\end{pmatrix}. $$
\noindent From (\ref{eq:3lines-conic}) we get that $ \Omega_{\mathbf{P}^2}^{1}(\log \mathcal{D})$ is stable and that its normalized bundle $ \Omega_{\mathbf{P}^2}^{1}(\log \mathcal{D})(-1) $ lives in the moduli space $ \mathbf{M}_{\mathbf{P}^2}(0,3) $, which has dimension $ 9 $, as we can see in \cite{OSS}. Since the number of parameters associated to three lines and a conic is $ 11 $, also in this case we can't get a Torelli type theorem. \\
\indent By using the second part of Theorem \ref{T:base}, we note that $ \Omega_{\mathbf{P}^2}^{1}(\log \mathcal{D})(-1) $ admits an exact sequence like the one for the logarithmic bundle of a smooth plane cubic curve. The link between these two vector bundles is explained in the following result:
\begin{thm0}\label{T:3lines-conic}
Let $ \mathcal{D} $ be the multi-degree arrangement with normal crossings on $ \mathbf{P}^{2} $ given by $ \{x_{0}x_{1}x_{2}f_{4} = 0\} $, where $ f_{4} $ is as in (\ref{eq:f4}). Then there exists $ \mathcal{D'} = \{D\} $, where $ D \subset \mathbf{P}^2 $ is a smooth cubic curve, such that 
$$ \Omega_{\mathbf{P}^2}^{1}(\log \mathcal{D}) \cong \Omega_{\mathbf{P}^2}^{1}(\log \mathcal{D'})(1). $$
\end{thm0}
\begin{proof}
Our aim is to find $ g \in H^{0}(\mathbf{P}^{2}, \mathcal{O}_{\mathbf{P}^{2}}(3)) $ such that, for all $ i \in \{0,1,2\} $, 
\begin{equation}\label{eq:cubica0}
\partial_{i}g = e_{0}^{i}(-x_{0}\partial_{0}f_{4})+e_{1}^{i}(-x_{1}\partial_{1}f_{4})+e_{2}^{i}(-x_{2}\partial_{2}f_{4})
\end{equation}
for certain $ e_{j}^{i} \in \mathbf{C} $. \\
By using Schwarz's theorem, from (\ref{eq:cubica0}) we get, for all $ i, h \in \{0,1,2\}, i \not= h $,
$$ \displaystyle{ \sum_{j = 0}^{2} e_{j}^{h}\partial_{i}(x_{j}\partial_{j}f_{4}) } = \displaystyle{ \sum_{j = 0}^{2} e_{j}^{i}\partial_{h}(x_{j}\partial_{j}f_{4}) }. $$
Let us denote by $ \{a_{uv}^{j}\} $ the coefficients of $ x_{j}\partial_{j}f_{4} $ for $ j \in \{0,1,2\} $; by using the identity principle for polynomials we get the following linear system of 9 equations with variables $ e_{j}^{i} $: for all $ i, u, v \in \{0,1,2\}, \, i \not= u $,
\begin{equation}\label{eq:system}
\displaystyle{ \sum_{j = 0}^{2} a_{uv}^{j} e_{j}^{i} } = \displaystyle{ \sum_{j = 0}^{2} a_{iv}^{j} e_{j}^{u} }. 
\end{equation}
Since $ a_{uv}^{j} $ depend on the coefficients of $ f_{4} $, the matrix of (\ref{eq:system}) is:
$$ H = \begin{pmatrix} 
d_{01} & d_{01} & 0 & -2d_{00} & 0 & 0 & 0 & 0 & 0 \cr 
0 & 2d_{11} & 0 & -d_{01} & -d_{01} & 0 & 0 & 0 & 0 \cr 
0 & d_{12} & d_{12} & -d_{02} & 0 & -d_{02} & 0 & 0 & 0 \cr 
d_{02} & 0 & d_{02} & 0 & 0 & 0 & -2d_{00} & 0 & 0 \cr
0 & d_{12} & d_{12} & 0 & 0 & 0 & -d_{01} & -d_{01} & 0 \cr
0 & 0 & 2d_{22} & 0 & 0 & 0 & -d_{02} & 0 & -d_{02} \cr
0 & 0 & 0 & d_{02} & 0 & d_{02} & -d_{01} & -d_{01} & 0 \cr
0 & 0 & 0 & 0 & d_{12} & d_{12} & 0 & -2d_{11} & 0 \cr
0 & 0 & 0 & 0 & 0 & 2d_{22} & 0 & -d_{12} & -d_{12} \cr
\end{pmatrix}. $$
By using Gaussian elimination, we get that $ rank \, H \leq 8 $. So, let us assume that $ \overline{e}_{j}^{i} $ is a solution of our system, we need a cubic polynomial $ g $ such that conditions in (\ref{eq:cubica0}) are satisfied. Let us integrate with respect to $ x_{0} $ the equation (\ref{eq:cubica0}) with $ i = 0 $, we get
\begin{equation}\label{eq:cubicax0}
g(x_{0},x_{1},x_{2})=-2\overline{e}_{0}^{0}x_{0}^{2}\left({x_{0} \over 3}a_{00}^{0}+{x_{1} \over 2}a_{01}^{0}+{x_{2} \over 2}a_{02}^{0} \right)+ h(x_{1},x_{2}) +
\end{equation}
$$ -2x_{0}\left[ \overline{e}_{1}^{0}\left({x_{0}x_{1} \over 2}a_{01}^{0}+x_{1}^2a_{11}^{0}+x_{1}x_{2}a_{12}^{0} \right) + \overline{e}_{2}^{0}\left({x_{0}x_{2} \over 2}a_{02}^{0}+x_{1}x_{2}a_{12}^{0}+x_{2}^2a_{22}^{0} \right) \right] $$
where $ h $ is a function to be determined. 
If we compute $ \partial_{1}g $ from (\ref{eq:cubicax0}), we substitute it in (\ref{eq:cubica0}) with $ i = 1 $ and we integrate with respect to $ x_{1} $ we get
\begin{equation}\label{eq:cubicax1}
h(x_{1},x_{2}) = x_{0}x_{1}\left[\overline{e}_{0}^{0}x_{0}a_{01}^{0}+2\overline{e}_{1}^{0}\left({x_{0} \over 2}a_{01}^{0}+x_{1}a_{11}^{0}+x_{2}a_{12}^{0} \right) \right]+ 
\end{equation}
$$ +x_{0}x_{1}\left[2\overline{e}_{2}^{0}x_{2}a_{12}^{0}-\overline{e}_{0}^{1}\left(2x_{0}a_{00}^{0}+x_{1}a_{01}^{0}+2x_{2}a_{02}^{0} \right)\right]+  i(x_{2}) + $$
$$  -\overline{e}_{1}^{1}x_{1}^{2}\left(x_{0}a_{01}^{0}+2{x_{1} \over 3}a_{11}^{0}+x_{2}a_{12}^{0} \right) -x_{1}x_{2}\overline{e}_{2}^{1}\left(2x_{0}a_{02}^{0}+x_{1}a_{12}^{0}+2x_{2}a_{22}^{0} \right)  $$
where we have to determine the function $ i $.
Finally, if we compare $ \partial_{2}g $ from (\ref{eq:cubicax0}) with (\ref{eq:cubica0}) for $ i = 2 $, using also  (\ref{eq:cubicax1}) and we integrate with respect to $ x_{2} $, we can find explicitly $ i $, so that the required polynomial is
$$ g(x_{0},x_{1},x_{2}) = - {2 \over 3}\overline{e}_{0}^{0}a_{00}^{0}x_{0}^3-2\overline{e}_{0}^{1}a_{00}^{0}x_{0}^2x_{1}-2\overline{e}_{1}^{0}a_{11}^{0}x_{0}x_{1}^2- {2 \over 3}\overline{e}_{1}^{1}a_{11}^{0}x_{1}^3-2\overline{e}_{2}^{0}a_{22}x_{0}x_{2}^2 + $$
$$ -2\overline{e}_{0}^{2}a_{00}^{0}x_{0}^2x_{2}-2(\overline{e}_{1}^{0}+\overline{e}_{2}^{0})a_{12}^{0}x_{0}x_{1}x_{2}-2\overline{e}_{1}^{2}a_{11}^{0}x_{1}^2x_{2}
-2\overline{e}_{2}^{1}a_{22}^{0}x_{1}x_{2}^2- {2 \over 3}\overline{e}_{2}^{2}a_{22}^{0}x_{2}^3. $$
\end{proof}
\begin{rem}
The proof of Theorem \ref{T:3lines-conic} implies also Hermite's Theorem (1868), which asserts that a net of conics can be regarded as the net of the polar conics with respect to a given cubic curve (see \cite{EC}, book III, chapter III, section 29).
\end{rem}
\begin{rem}
If we require that $ \partial_{i}g = x_{i}\partial_{i}f_{4} $, for all $ i \in \{0,1,2\} $, then 
\begin{equation}\label{eq:fermatcubic}
g(x_{0},x_{1},x_{2}) = {2 \over 3}\left(a_{00}^{0}x_{0}^3+a_{11}^{0}x_{1}^3+a_{22}^{0}x_{2}^3\right), 
\end{equation}
provided that the conic is given by $ f_{4}(x_{0},x_{1},x_{2}) = d_{00}x_{0}^2+d_{11}x_{1}^2+d_{22}x_{2}^2. $
So, let $ \mathcal{D}=\{x_{0}x_{1}x_{2}f_{4}=0\} $ and $ \mathcal{D}' =\{x_{0}x_{1}x_{2}f'_{4}=0\} $ be two arrangements with normal crossings in $ \mathbf{P}^2 $ each of which with a conic given by a diagonalized quadratic form. $ \mathcal{D} $ and $ \mathcal{D}' $ correspond to a logarithmic bundle which is isomorphic to the logarithmic bundle of a smooth cubic like the one of (\ref{eq:fermatcubic}). Since in \cite{UY1} it is proved that two smooth cubics which are both Fermat yield isomorphic logarithmic bundles, then $ \Omega_{\mathbf{P}^2}^{1}(\log \mathcal{D}) \cong \Omega_{\mathbf{P}^2}^{1}(\log \mathcal{D'}) $.
\end{rem}
\begin{rem}
Although we know that a multi-degree arrangement with three lines and a conic, because of parameters computations, isn't Torelli  and that Theorem \ref{T:3lines-conic} holds, in this case the problem of determining the fiber of (\ref{eq:Torellimap}) is still open.
\end{rem}

\section{Arrangements with few conics}

Let $ \mathcal{D} = \{C_{1}, \ldots, C_{\ell}\} $ be an arrangement of $ \ell \in \{4, \ldots, 8 \} $ conics with normal crossings on $ \mathbf{P}^{2} $.\\
\indent Let $ \mathbf{F}^{2}_{5} = \{(x,y) \in \mathbf{P}^{2} \times \mathbf{P}_{5} \, | \, x \in C_{y}\} $ be the \emph{incidence variety} point-conic in $ \mathbf{P}^{2} \times \mathbf{P}_{5} $, where $ C_{y} \subset \mathbf{P}^{2} $ denotes the conic defined by the point $ y \in \mathbf{P}_{5} $ with the \emph{Veronese} correspondence and let $ \overline{\alpha} $, $ \overline{\beta} $  the restrictions to $ \mathbf{F}^{2}_{5} $ of the usual projections $ \alpha $ and $ \beta $:
\\
$$ \mathbf{F}^{2}_{5} \subset \mathbf{P}^{2} \times \mathbf{P}_{5} $$
$$ \buildrel \rm {\overline{\alpha}} \over \swarrow \quad\quad\quad \buildrel \rm {\overline{\beta}} \over \searrow \quad\quad\quad\quad $$ 
$$ \mathbf{P}^2  \quad\quad\quad\quad\quad \mathbf{P}_5 \quad\quad\quad\quad  $$
\begin{rem}\label{r:unstcon2}
Let $ UC(\Omega_{\mathbf{P}^2}^{1}(\log \mathcal{D})) $ be the set of unstable conics of $ \Omega_{\mathbf{P}^2}^{1}(\log \mathcal{D}) $, in the sense of Definition \ref{D:hypunst}. $ UC(\Omega_{\mathbf{P}^2}^{1}(\log \mathcal{D})) $ coincides with the support of the first direct image sheaf $ R^1(\overline{\beta}_{\ast}\overline{\alpha}^{\ast}\Omega_{\mathbf{P}^2}^{1}(\log \mathcal{D})(-1)) $: indeed, for all $ y\in \mathbf{P}_{5} $, 
$$ R^1(\overline{\beta}_{\ast}\overline{\alpha}^{\ast}\Omega_{\mathbf{P}^2}^{1}(\log \mathcal{D})(-1))_{y} = H^{1}(\overline{\beta}^{-1}(y), \overline{\alpha}^{\ast}\Omega_{\mathbf{P}^2}^{1}(\log \mathcal{D})(-1)_{|_{\overline{\beta}^{-1}(y)}}) = $$
$$ = H^{1}(C_{y}, \Omega_{\mathbf{P}^2}^{1}(\log \mathcal{D})(-1)_{|_{C_{y}}}) = H^{0}(C_{y}, {\Omega_{\mathbf{P}^2}^{1}(\log \mathcal{D})}^{\vee}_{|_{C_{y}}})^{\vee}, $$
where the last inequality follows from Serre's duality.
\end{rem}
\noindent So, let tensor with $ \mathcal{O}_{\mathbf{P}^2}(-1) $ the exact sequence (\ref{eq:Anconas}) where $ n =2, d_{i} = 2 $ and let apply the functor $ \overline{\beta}_{\ast}\overline{\alpha}^{\ast} $, we get: 
\begin{equation}\label{eq:dirimage}
0 \rightarrow R^{0}\overline{\beta}_{\ast}\overline{\alpha}^{\ast}( \mathcal{O}_{\mathbf{P}^2}(-3))^{\ell} \rightarrow R^{0}\overline{\beta}_{\ast}\overline{\alpha}^{\ast}( \mathcal{O}_{\mathbf{P}^2}(-2)^{3} \oplus \mathcal{O}_{\mathbf{P}^2}(-1)^{\ell-1}) \rightarrow 
\end{equation}
$$ \rightarrow R^{0}\overline{\beta}_{\ast}\overline{\alpha}^{\ast}(\Omega_{\mathbf{P}^2}^{1}(\log \mathcal{D})(-1)) \rightarrow R^{1}\overline{\beta}_{\ast}\overline{\alpha}^{\ast}( \mathcal{O}_{\mathbf{P}^2}(-3))^{\ell} \rightarrow $$
$$ \rightarrow R^{1}\overline{\beta}_{\ast}\overline{\alpha}^{\ast}( \mathcal{O}_{\mathbf{P}^2}(-2)^{\ell-1} \oplus \mathcal{O}_{\mathbf{P}^2}(-1)^{3}) \rightarrow R^{1}\overline{\beta}_{\ast}\overline{\alpha}^{\ast}(\Omega_{\mathbf{P}^2}^{1}(\log \mathcal{D})(-1)) \rightarrow 0.  $$
In order to determine the terms in (\ref{eq:dirimage}) we consider
$$ 0 \longrightarrow \mathcal{O}_{\mathbf{P}^{2} \times \mathbf{P}_{5}}(-2, -1) \longrightarrow \mathcal{O}_{\mathbf{P}^{2} \times \mathbf{P}_{5}} \longrightarrow \mathcal{O}_{{\mathbf{F}_{5}^{2}}} \longrightarrow 0, $$
we do the tensor product with $ \alpha^{\ast}(\mathcal{O}_{\mathbf{P}^{2}}(t)) $, where $ t \in \{-1, -2, -3\} $ and we apply the functor $ \beta_{\ast} $. In this way (\ref{eq:dirimage}) becomes
\begin{equation}\label{eq:unstconseq} 0 \longrightarrow  R^{0}\overline{\beta}_{\ast}\overline{\alpha}^{\ast}(\Omega_{\mathbf{P}^2}^{1}(\log \mathcal{D})(-1)) \longrightarrow (\Omega^{1}_{\mathbf{P}_{5}})^{\ell} \buildrel \rm \it{F} \over \longrightarrow 
\end{equation}
$$ \buildrel \rm \it{F} \over \longrightarrow (\mathcal{O}_{\mathbf{P}_{5}}(-1)^{3})^{3} \oplus \mathcal{O}_{\mathbf{P}_{5}}(-1)^{\ell-1} \longrightarrow R^{1}\overline{\beta}_{\ast}\overline{\alpha}^{\ast}(\Omega_{\mathbf{P}^2}^{1}(\log \mathcal{D})(-1)) \longrightarrow 0. $$
\begin{rem}
In order to investigate $ UC(\Omega_{\mathbf{P}^2}^{1}(\log \mathcal{D})) $, it suffices to study the cokernel of the map $ F $ appearing in (\ref{eq:unstconseq}). 
\end{rem}
\begin{rem}
More generally, all the previous arguments can be applied to a vector bundle $ E $ fitting in an exact sequence like the one of $ \Omega_{\mathbf{P}^2}^{1}(\log \mathcal{D}) $.
\end{rem} 
Now, let us assume that $ \ell = 4 $. In what follows, by using Macaulay2 software system, we produce $ \mathcal{D}_{0} = \{ C_{0, \,1}, C_{0, \,2}, C_{0, \,3}, C_{0, \, 4} \} $ such that 
\begin{equation}\label{eq:equality}
UC(\Omega_{\mathbf{P}^2}^{1}(\log \mathcal{D}_{0})) =  \{ C_{0, \,1}, C_{0, \,2}, C_{0, \,3}, C_{0, \, 4} \}.
\end{equation}
\noindent{\bf Example 1}\\
\noindent $ \mathcal{D}_{0} $ is made of four smooth random conics with normal crossings:
$$ C_{0,1}: 42x_{0}^{2}-50x_{0}x_{1}+9x_{1}^{2}+39x_{0}x_{2}-15x_{1}x_{2}-22x_{2}^{2} = 0, $$
$$ C_{0,2}: 50x_{0}^{2}+45x_{0}x_{1}-39x_{1}^{2}-29x_{0}x_{2}+30x_{1}x_{2}+19x_{2}^{2} = 0, $$
$$ C_{0,3}: -38x_{0}^{2}+2x_{0}x_{1}-36x_{1}^{2}-4x_{0}x_{2}-16x_{1}x_{2}-6x_{2}^{2} = 0, $$
$$ C_{0,4}: -32x_{0}^{2}+31x_{0}x_{1}-38x_{1}^{2}-32x_{0}x_{2}+31x_{1}x_{2}+24x_{2}^{2} = 0. $$
By multiplying the four polynomials defining the conics, we get the polynomial $ f \in k[x_{0},x_{1},x_{2}]_{8} = R_{8} $ associated to $ \mathcal{D}_{0} $, where $ k $ is the field $ \mathbf{Z}_{101} $. According to Definition \ref{d:Gaussmap}, we consider the kernel $ E $ of the \emph{Gauss map} and we construct the matrix $ M \in M_{6,4}(R) $ associated to the module defining $ \Omega_{\mathbf{P}^2}^{1}(\log \mathcal{D}_{0}) $. Then we determine the elements of $ UC(\Omega_{\mathbf{P}^2}^{1}(\log \mathcal{D}_{0})) $: as we can see in Remark \ref{r:unstcon2}, $ UC(\Omega_{\mathbf{P}^2}^{1}(\log \mathcal{D}_{0})) $ is the zero locus of the order $ 4 $ minors of the matrix $ Z $, whose cokernel is equal to the cokernel of $ F $. In particular, posing $ T = k[y_{0}, \ldots, y_{5}] $, $ Z \in M_{4,12}( T) $ is the product of $ C \in M_{4,24}( T) $ and $ B \in M_{24,12}( T) $, where $ C $ is the matrix of variables needed to get $ \Omega_{\mathbf{P}^5}^{1} $ and $ B $ is the syzygy matrix of $ A \in M_{12,24}( T) $ whose entries are the coefficients of the polynomials in $ M $. The ideal $ J $ generated by the $ 4 \times 4 $ minors of $ Z $ has dimension $ 1 $ and degree $ 4 $, from which (\ref{eq:equality}) follows. \\
\noindent This is the script of our algorithm.
\begin{verbatim}
k=ZZ/101
R=k[x_0..x_2]
ran=random(R^{1:0},R^{4:-2})
f=1_R; for t from 0 to rank source ran-1 do f=f*(ran_(0,t))
E=ker map(R^{1:-1+(degree f)_0},R^{3:0},diff(vars R,f))
M=(res dual E).dd_1
T=k[y_0..y_5]
coe=(M,k,i,j)->diff(symmetricPower(k,vars R),transpose(symmetricPower
(k-2,vars R))*submatrix(M,{i},{j}))
coe2=(M,i,j)->diff(transpose(vars(R))*submatrix(M,{i},{j}),symmetric
Power(2,vars R))
expa=(M,k)->matrix table(rank target M,rank source M,(i,j)->coe(M,k,
i,j))
expa2=(M)->matrix table(rank target M,rank source M,(i,j)->coe2(M,i,
j))
A=sub(matrix(expa(submatrix(M,{0..2},{0..3}),2), expa2(submatrix(M,
{3..5},{0..3}))),T)
B=syz A
C=(id_(T^{4:0}))**(vars T)
Z=C*B
J=minors(4,Z)
dim J
degree J
\end{verbatim}

\begin{rem}
The previous algorithm can be performed for all $ \ell $. In particular, if $ \ell=5 $ then we can get another example such that the unstable conics of the logarithmic bundle coincide with the conics of the arrangement.
\end{rem}
Starting from the previous example, we can prove the following:
\begin{thm0}\label{T:molteconiche} If $ \ell \geq 4 $, then the map 
$$ \mathcal{D} = \{ C_{1}, \ldots, C_{\ell} \} \longrightarrow \Omega_{\mathbf{P}^2}^{1}(\log \mathcal{D})  $$
is generically injective. 
\end{thm0}
\begin{proof}
First, let us assume that $ \ell = 4 $. Let us consider the \emph{incidence variety} $ W = \{ (\mathcal{D}, C) \in (\mathbf{P}_5 \times \mathbf{P}_5 \times \mathbf{P}_5 \times \mathbf{P}_5) \times \mathbf{P}_5 \, | \, C \in UC(\Omega_{\mathbf{P}^2}^{1}(\log \mathcal{D}))  \} $ and let $ \overline{a} $, $ \overline{b} $ be the restrictions to $ W $ of the projection morphisms, respectively,  from $ (\mathbf{P}_5 \times \mathbf{P}_5 \times \mathbf{P}_5 \times \mathbf{P}_5) $ and $\mathbf{P}_5 $. \\
From the previous example we have that $ \overline{a}^{-1}(\mathcal{D}_{0}) = \mathcal{D}_{0} $. So, for all arrangements $ \mathcal{D} \in \mathbf{P}_5 \times \mathbf{P}_5 \times \mathbf{P}_5 \times \mathbf{P}_5 $, $ dim \,  \overline{a}^{-1}(\mathcal{D}) \geq 0 $ and $ h^{0}(\overline{a}^{-1}(\mathcal{D}), \mathcal{O}_{W}) = length \,  \overline{a}^{-1}(\mathcal{D}) \geq 4. $   
To conclude the proof, it suffices to show that there exists $ V \subset \mathbf{P}_5 \times \mathbf{P}_5 \times \mathbf{P}_5 \times \mathbf{P}_5 $ open such that, for all $ \mathcal{D} \in V $ 
\begin{equation}\label{eq:0dim}
dim \, \overline{a}^{-1}(\mathcal{D}) = 0,
\end{equation}
\begin{equation}\label{eq:lenght4}
length \, \overline{a}^{-1}(\mathcal{D}) = 4.
\end{equation}
We remark that the dimension $ d $ of the fiber given by the morphism $ \overline{a} $ has the upper semicontinuity property (\cite{Mum}, chapter 1, section 8, corollary 3), which implies that $ \{w \in W \, | \, d(w)\geq 1 \} $  
is a closed subset in $ W $. So the set $ V_{1} = \overline{a}(\{w \in W \, | \, d(w) \leq 0\}) = \overline{a}(\{w \in W \, | \, d(w) = 0\}) $
is open in $ \mathbf{P}_5 \times \mathbf{P}_5 \times \mathbf{P}_5 \times \mathbf{P}_5 $.
By using the upper semicontinuity of the length of the fiber given by the morphism $ \overline{a} $ (this fact is a consequence of theorem $ 12.8 $, chapter $ 3 $ of \cite{H}; this theorem holds with the hypothesis of flatness, in our case we have the generic flatness) we get that the set $ \{ \mathcal{D} \in \mathbf{P}_5 \times \mathbf{P}_5 \times \mathbf{P}_5 \times \mathbf{P}_5 \, | \, length \,\overline{a}^{-1}(\mathcal{D}) \geq 5 \} $ is closed in $ \mathbf{P}_5 \times \mathbf{P}_5 \times \mathbf{P}_5 \times \mathbf{P}_5 $. As above, the set $ V_{2} = \{\mathcal{D} \in \mathbf{P}_5 \times \mathbf{P}_5 \times \mathbf{P}_5 \times \mathbf{P}_5 \, | \, length \,\overline{a}^{-1}(\mathcal{D}) \leq 4\} =  \{\mathcal{D} \in \mathbf{P}_5 \times \mathbf{P}_5 \times \mathbf{P}_5 \times \mathbf{P}_5 \, | \, length \,\overline{a}^{-1}(\mathcal{D}) = 4\}  $ is open in $ \mathbf{P}_5 \times \mathbf{P}_5 \times \mathbf{P}_5 \times \mathbf{P}_5. $ The points of the open set $ V = V_{1} \cap V_{2} $ satisfy the required properties (\ref{eq:0dim}) and (\ref{eq:lenght4}). \\
Now, if $ \ell \geq 5 $, then we can apply the \emph{reduction technique}, performed in the proof of Theorem \ref{multi-deg thm}, to $ \Omega_{\mathbf{P}^2}^{1}(\log \mathcal{D}) $ and to the conics of $ \mathcal{D} $: at each step we get a logarithmic bundle of a conic-arrangement with one component less, till we reduce to the case of four conics, studied above.   
\end{proof}

Finally we discuss the case of $ \ell = 3 $. \\
Let $ \mathcal{D} = \{C_{1}, C_{2}, C_{3}\} $ be an arrangement of conics with normal crossings on $ \mathbf{P}^{2} $. Let us start by analyzing $ UC(\Omega_{\mathbf{P}^2}^{1}(\log \mathcal{D})) $. In order to do that, let us consider the exact sequence (\ref{eq:unstconseq}) with $ \ell = 3 $: $ UC(\Omega_{\mathbf{P}^2}^{1}(\log \mathcal{D})) $, the support of $ R^{1}\overline{\beta}_{\ast}\overline{\alpha}^{\ast}(\Omega_{\mathbf{P}^2}^{1}(\log \mathcal{D})(-1)) $, is the maximal degeneration locus of the morphism $ (\Omega^{1}_{\mathbf{P}_{5}})^{3} \buildrel \rm F \over \longrightarrow (\mathcal{O}_{\mathbf{P}_{5}}(-1)^{3})^{3} \oplus \mathcal{O}_{\mathbf{P}_{5}}(-1)^{2} $, i.e. it coincides with the scheme $ D_{10}(F) = \{ y \in \mathbf{P}_{5} \, | \, rank(F_{y}) \leq 10  \} $, which, according to \cite{O0}, has expected codimension $ 5 $ in $ \mathbf{P}_{5} $ (we note that the computation of the expected codimension is meaningless when $ \ell \geq 4 $). If this is the case, the number of points in $ D_{10}(F) $ is determined by Porteous' formula: 
\begin{equation}\label{eq:Porteous}
[D_{10}(F)] = det [ 
c_{1-i+j}((((\mathcal{O}_{\mathbf{P}_{5}}(-1)^{3})^{3} \oplus \mathcal{O}_{\mathbf{P}_{5}}(-1)^{2}) - (\Omega^{1}_{\mathbf{P}_{5}})^{3})],  
\end{equation}
where $ 1 \leq i,j \leq 5 $. The generic entry of the matrix (\ref{eq:Porteous}) is the coefficient of the term of degree $ (1-i+j) $ in the formal series in one variable coming from the quotient of the Chern polynomials of $ ((\mathcal{O}_{\mathbf{P}_{5}}(-1)^{3})^{3} \oplus \mathcal{O}_{\mathbf{P}_{5}}(-1)^{2}) $ and $ (\Omega^{1}_{\mathbf{P}_{5}})^{3} $. Thus $ [D_{10}(F)] = 21. $ More generally, we get the following:
\begin{prop}\label{p:21conics}
Let $ E $ be a vector bundle over $ \mathbf{P}^{2} $ such that
$$ 0 \longrightarrow \mathcal{O}_{\mathbf{P}^{2}}(-2)^{3} \longrightarrow \mathcal{O}_{\mathbf{P}^{2}}(-1)^{3} \oplus \mathcal{O}_{\mathbf{P}^{2}}^{2} \longrightarrow E \longrightarrow 0 $$
is exact and let $ UC(E) $ be the set of unstable conics of $ E $, in the sense of (\ref{eq:hypunst}).
$ UC(E) $ is expected to be a 0-dimensional scheme of $ \mathbf{P}_{5} $ with 21 points.
\end{prop}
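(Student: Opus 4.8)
The plan is to rerun, for an arbitrary $E$ with the displayed resolution, the direct-image computation already carried out above for $\Omega_{\mathbf{P}^2}^{1}(\log\mathcal{D})$ in the case $\ell=3$, and then to evaluate the resulting degeneracy-locus class by the Thom--Porteous formula (\ref{eq:Porteous}); the key observation is that everything depends only on the \emph{shape} of the resolution, not on $E$ being a logarithmic bundle. First I would tensor the resolution of $E$ by $\mathcal{O}_{\mathbf{P}^2}(-1)$, obtaining
$$ 0 \longrightarrow \mathcal{O}_{\mathbf{P}^2}(-3)^{3} \longrightarrow \mathcal{O}_{\mathbf{P}^2}(-2)^{3} \oplus \mathcal{O}_{\mathbf{P}^2}(-1)^{2} \longrightarrow E(-1) \longrightarrow 0, $$
which has exactly the shape of the twisted resolution producing (\ref{eq:dirimage}). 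Applying $\overline{\beta}_{\ast}\overline{\alpha}^{\ast}$ and using the computation of $R^{i}\overline{\beta}_{\ast}\overline{\alpha}^{\ast}(\mathcal{O}_{\mathbf{P}^2}(t))$ for $t\in\{-1,-2,-3\}$ — which involves only the twists — reproduces (\ref{eq:unstconseq}) with $E$ in place of $\Omega_{\mathbf{P}^2}^{1}(\log\mathcal{D})$, whose central arrow is a morphism
$$ F\colon (\Omega^{1}_{\mathbf{P}_{5}})^{3} \longrightarrow (\mathcal{O}_{\mathbf{P}_{5}}(-1)^{3})^{3} \oplus \mathcal{O}_{\mathbf{P}_{5}}(-1)^{2}. $$
Repeating the Serre-duality identification of Remark \ref{r:unstcon2}, $UC(E)$ is the support of $\operatorname{coker}F=R^{1}\overline{\beta}_{\ast}\overline{\alpha}^{\ast}(E(-1))$, i.e. the degeneracy scheme $D_{10}(F)=\{\,y\in\mathbf{P}_{5}\mid \operatorname{rank}F_{y}\le 10\,\}$.

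Next I would perform the dimension count. The source of $F$ has rank $3\cdot 5=15$ and the target rank $3\cdot 3+2=11$, so the generic rank is $11$ and the drop to rank $10$ occurs in expected codimension $(15-10)(11-10)=5=\dim\mathbf{P}_{5}$. Hence, when this expected codimension is attained, $D_{10}(F)$ is $0$-dimensional; I do not claim this holds for every such $E$, only that it is the expected behaviour, which is precisely what the statement asserts.

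Finally I would evaluate $[D_{10}(F)]$. With $h$ the hyperplane class of $\mathbf{P}_{5}$ one has $c(\Omega^{1}_{\mathbf{P}_{5}})=(1-h)^{6}$ and $c(\mathcal{O}_{\mathbf{P}_{5}}(-1))=1-h$, so the virtual total Chern class entering (\ref{eq:Porteous}) is
$$ c\bigl(((\mathcal{O}_{\mathbf{P}_{5}}(-1)^{3})^{3}\oplus\mathcal{O}_{\mathbf{P}_{5}}(-1)^{2})-(\Omega^{1}_{\mathbf{P}_{5}})^{3}\bigr)=\frac{(1-h)^{11}}{(1-h)^{18}}=(1-h)^{-7}, $$
the class of a virtual bundle whose Chern roots are seven copies of $h$. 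The relevant $5\times5$ Porteous determinant is $\Delta^{1}_{5}=\det(c_{1+j-i})_{1\le i,j\le 5}$, which by Jacobi--Trudi is the Schur class $s_{(1^{5})}=e_{5}$ of those roots, hence equal to $\binom{7}{5}h^{5}=21\,h^{5}$, giving $[D_{10}(F)]=21$. The only real obstacle is the direct-image step producing $F$ with the stated source and target; but this is identical to the computation already performed for the logarithmic bundle and depends solely on the resolution type, so it carries over verbatim, and the genuinely new ingredient is just this short Chern-class evaluation.
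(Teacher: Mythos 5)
Your proposal is correct and follows essentially the same route as the paper: tensor the resolution by $\mathcal{O}_{\mathbf{P}^2}(-1)$, push forward along the point--conic incidence variety to get the morphism $F\colon (\Omega^{1}_{\mathbf{P}_{5}})^{3} \to (\mathcal{O}_{\mathbf{P}_{5}}(-1)^{3})^{3}\oplus\mathcal{O}_{\mathbf{P}_{5}}(-1)^{2}$, identify $UC(E)$ with $D_{10}(F)$ via Serre duality on the fibres, note the expected codimension $(15-10)(11-10)=5$, and evaluate the Porteous determinant. The only difference is cosmetic: the paper asserts $[D_{10}(F)]=21$ after describing the Chern-polynomial quotient in words, whereas you carry out the arithmetic explicitly via $(1-h)^{11}/(1-h)^{18}=(1-h)^{-7}$ and the Jacobi--Trudi identity $\det(c_{1+j-i})_{1\le i,j\le 5}=e_{5}=\binom{7}{5}=21$, which is a correct and welcome filling-in of that step.
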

\begin{rem}\label{r:nounstcon}
If we apply the algorithm performed in Example 1 of this section in the case of $ \ell $ = 3, we can find some arrangements $ \mathcal{D} $ such that $ UC(\Omega^{1}_{\mathbf{P}^{2}}(log \mathcal{D})) $ satisfies the expected properties of Proposition \ref{p:21conics}. Indeed, according to the notations introduced in such algorithm, the variety in $ \mathbf{P}_{5} $ defined by the ideal $ J $ has 21 distinct points, which, in terms of the quadratic Veronese embedding of the projective plane, correspond to smooth conics in $ \mathbf{P}^{2} $. Between these $ 21 $ points, 3 correspond to the component of $ \mathcal{D} $ and the remaining 18 belong to a net quadrics in $ \mathbf{P}_{5} $, whose base locus is a $ K$3-surface with 12 singular points, that don't seem to be related to the 18 conics we are interested in. The explicit determination of such 18 points or, equivalently, of a primary decomposition of $ J $ saturated with the ideals defining the conics of $ \mathcal{D} $ as points in $ \mathbf{P}_{5} $, would be interesting to solve the Torelli problem in this case, but, at the moment, it seems to be hard, also with a computer.           
\end{rem}
According to Remark \ref{r:nounstcon}, instead of studying $ UC(\Omega_{\mathbf{P}^2}^{1}(\log \mathcal{D})) $, we can focus on $ UL(\Omega_{\mathbf{P}^2}^{1}(\log \mathcal{D})) $,  the set of \emph{unstable lines} of $ \Omega_{\mathbf{P}^2}^{1}(\log \mathcal{D}) $ in the sense of Definition \ref{D:hypunst}. Let $ \mathbf{F}^{2}_{2} $ be the \emph{incidence variety} point-line in $ \mathbf{P}^2 \times \mathbf{P}_{2} $, i.e.
\begin{equation}
\mathbf{F}^{2}_{2} = \{(x,y) \in  \mathbf{P}^2 \times \mathbf{P}_{2} \, | \, x \in L_{y} \}
\end{equation}
where $ L_{y} \subset \mathbf{P}^2 $ is the line defined by $ y \in \mathbf{P}_{2} $ and let $ \overline{p} $, $ \overline{q} $ be, respectively, the restrictions to $ \mathbf{F}^{2}_{2} $ of the projection maps $ p $, $ q $ as in the following diagram:
$$ \mathbf{F}^{2}_{2} \subset \mathbf{P}^{2} \times \mathbf{P}_{2} $$
$$ \quad\quad \buildrel \rm {\overline{p}} \over \swarrow \quad\quad\quad \buildrel \rm {\overline{q}} \over \searrow \quad\quad\quad\quad\quad\quad\quad $$ 
$$ \mathbf{P}^2  \quad\quad\quad\quad\quad \mathbf{P}_2 \quad\quad\quad\quad\quad  $$
We remark that $ UL(\Omega_{\mathbf{P}^2}^{1}(\log \mathcal{D})) $, as a subset of $ \mathbf{P}_{2} $, is the support of $ R^1(\overline{q}_{\ast}\overline{p}^{\ast}\Omega_{\mathbf{P}^2}^{1}(\log \mathcal{D})(-2)) $. Namely, if $ y\in \mathbf{P}_{2} $ then we have that 
$$ R^1(\overline{q}_{\ast}\overline{p}^{\ast}\Omega_{\mathbf{P}^2}^{1}(\log \mathcal{D})(-2))_{y} = H^{1}(\overline{q}^{-1}(y), \overline{p}^{\ast}\Omega_{\mathbf{P}^2}^{1}(\log \mathcal{D})(-2)_{|_{\overline{q}^{-1}(y)}}) = $$
$$ = H^{1}(L_{y}, \Omega_{\mathbf{P}^2}^{1}(\log \mathcal{D})(-2)_{|_{L_{y}}}) = H^{0}(L_{y}, {\Omega_{\mathbf{P}^2}^{1}(\log \mathcal{D})}^{\vee}_{|_{L_{y}}})^{\vee}, $$
where the last equality follows from Serre's duality. In order to study this support, we apply the functor $ \overline{q}_{\ast} \overline{p}^{\ast} $ to the exact sequence (\ref{eq:Anconas}) in the case of three conics twisted by $ -2 $ and we get
\begin{equation}\label{eq:directimagelines} 
0 \longrightarrow R^{0}\overline{q}_{\ast} \overline{p}^{\ast} (\mathcal{O}_{\mathbf{P}^2}(-4)^{3}) \longrightarrow R^{0}\overline{q}_{\ast} \overline{p}^{\ast} (\mathcal{O}_{\mathbf{P}^2}(-3)^{3} \oplus \mathcal{O}_{\mathbf{P}^2}(-2)^{2}) \longrightarrow 
\end{equation}
$$ \longrightarrow R^{0}\overline{q}_{\ast} \overline{p}^{\ast} (\Omega_{\mathbf{P}^2}^{1}(\log \mathcal{D})(-2)) \longrightarrow R^{1}\overline{q}_{\ast} \overline{p}^{\ast} (\mathcal{O}_{\mathbf{P}^2}(-4)^{3}) \longrightarrow  \,\, $$
$$ \longrightarrow R^{1}\overline{q}_{\ast} \overline{p}^{\ast} (\mathcal{O}_{\mathbf{P}^2}(-3)^{3} \oplus \mathcal{O}_{\mathbf{P}^2}(-2)^{2}) \longrightarrow R^{1}\overline{q}_{\ast} \overline{p}^{\ast} (\Omega_{\mathbf{P}^2}^{1}(\log \mathcal{D})(-2)) \longrightarrow 0.  $$
Our aim is to describe the terms of (\ref{eq:directimagelines}). So we tensor   
$$ 0 \longrightarrow \mathcal{O}_{{\mathbf{P}^2} \times {\mathbf{P}_{2}}} (-1,-1) \longrightarrow \mathcal{O}_{{\mathbf{P}^2} \times {\mathbf{P}_{2}}} \longrightarrow \mathcal{O}_{\mathbf{F}^{2}_{2}} \longrightarrow 0 $$
with $ p^{\ast} \mathcal{O}_{\mathbf{P}}^2 (t) $, where $ t \in \{-4,-3,-2\} $ and then we apply $ q_{\ast} $.
By using Serre's duality and the Poincar\'e-Euler sequence, (\ref{eq:directimagelines}) turns out to be 
\begin{equation}\label{eq:supportsequence}
0 \longrightarrow  R^{0}\overline{q}_{\ast}\overline{p}^{\ast}(\Omega_{\mathbf{P}^2}^{1}(\log \mathcal{D})(-2)) \longrightarrow R^{1}\overline{q}_{\ast}\overline{p}^{\ast}(\mathcal{O}_{\mathbf{P}^2}(-4)^3)  \buildrel \rm G \over \longrightarrow
\end{equation}
$$ \quad\quad \buildrel \rm G \over \longrightarrow (\Omega_{\mathbf{P}_{2}}^{1})^{3} \oplus \mathcal{O}_{\mathbf{P}_{2}}(-1)^{2} \longrightarrow R^{1}\overline{q}_{\ast}\overline{p}^{\ast}(\Omega_{\mathbf{P}^2}^{1}(\log \mathcal{D})(-2)) \longrightarrow 0 $$
where $ R^{1}\overline{q}_{\ast}\overline{p}^{\ast}(\mathcal{O}_{\mathbf{P}^2}(-4)) $ fits in the exact sequence
$$ 0 \longrightarrow R^{1}\overline{q}_{\ast}\overline{p}^{\ast}(\mathcal{O}_{\mathbf{P}^2}(-4)) \longrightarrow \mathcal{O}_{\mathbf{P}_{2}}(-1)^{6} \longrightarrow \mathcal{O}_{\mathbf{P}_{2}}^{3} \longrightarrow 0, $$
and it has rank $ 3 $ over $ \mathbf{P}^{2} $.
The support of $ R^1(\overline{q}_{\ast}\overline{p}^{\ast}\Omega_{\mathbf{P}^2}^{1}(\log \mathcal{D})(-2)) $ is the maximal degeneration locus of the morphism $ G $ in (\ref{eq:supportsequence}), i.e. it's the scheme $ D_{7}(G) = \{ y \in \mathbf{P}_{2} \, | \, rank(G_{y}) \leq 7  \}. $
According to \cite{O0}, the expected codimension over $ \mathbf{P}_{2} $ of $ D_{7}(G) $ is $ 2 $, that is we expect a finite number of unstable lines for $ \Omega_{\mathbf{P}^2}^{1}(\log \mathcal{D}) $. Assuming that $ D_{7}(G) $ is $ 0 $-dimensional, the number of its points is given by Porteous' formula:
$$ [D_{7}(G)] = det [ 
c_{1-i+j}(((\Omega_{\mathbf{P}_{2}}^{1})^{3} \oplus \mathcal{O}_{\mathbf{P}_{2}}(-1)^{2}) - R^{1}\overline{q}_{\ast}\overline{p}^{\ast}(\mathcal{O}_{\mathbf{P}^2}(-4)^3))],  $$
where $ 1 \leq i,j \leq 2 $. The generic entry of $ [D_{7}(G)] $ is the coefficient of the degree-$ (1-i+j) $ term of the formal series in one variable defined as the quotient of the Chern polynomial of $ (\Omega_{\mathbf{P}_{2}}^{1})^{3} \oplus \mathcal{O}_{\mathbf{P}_{2}}(-1)^{2} $ with the one of $ R^{1}\overline{q}_{\ast}\overline{p}^{\ast}(\mathcal{O}_{\mathbf{P}^2}(-4)^3) $. So $ [D_{7}(G)] = 21 $.
These arguments imply the following:
\begin{prop}\label{p:21lines}
Let $ \mathcal{D} = \{C_{1}, C_{2}, C_{3}\} $ be a normal crossing arrangement of conics in $ \mathbf{P}^{2} $. $ UL(\Omega_{\mathbf{P}^2}^{1}(\log \mathcal{D})) $ is expected to be a 0-dimensional scheme of $ \mathbf{P}_{2} $ with 21 points.
\end{prop}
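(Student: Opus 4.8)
The plan is to realize $ UL(\Omega_{\mathbf{P}^2}^{1}(\log \mathcal{D})) $ as a determinantal locus of the bundle map $ G $ in (\ref{eq:supportsequence}) and then to compute its length by the Thom--Porteous formula. By the Serre-duality identification displayed just before the statement, $ UL(\Omega_{\mathbf{P}^2}^{1}(\log \mathcal{D})) $ is the support of $ R^{1}\overline{q}_{\ast}\overline{p}^{\ast}(\Omega_{\mathbf{P}^2}^{1}(\log \mathcal{D})(-2)) $, and by the right end of (\ref{eq:supportsequence}) this support is precisely the locus where $ G $ fails to be surjective. Writing $ \mathcal{E} = R^{1}\overline{q}_{\ast}\overline{p}^{\ast}(\mathcal{O}_{\mathbf{P}^2}(-4)^{3}) $ and $ \mathcal{F} = (\Omega_{\mathbf{P}_{2}}^{1})^{3} \oplus \mathcal{O}_{\mathbf{P}_{2}}(-1)^{2} $, the displayed resolution of $ R^{1}\overline{q}_{\ast}\overline{p}^{\ast}(\mathcal{O}_{\mathbf{P}^2}(-4)) $ gives $ \operatorname{rank}\mathcal{E} = 9 $, while $ \operatorname{rank}\mathcal{F} = 8 $. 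Hence the generic rank of $ G $ is $ 8 $ and $ UL(\Omega_{\mathbf{P}^2}^{1}(\log \mathcal{D})) = D_{7}(G) $, the locus where the rank drops by at least one.

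Next I would observe, following \cite{O0}, that such a locus has expected codimension $ (9-7)(8-7) = 2 $ in $ \mathbf{P}_{2} $. Since $ \dim \mathbf{P}_{2} = 2 $, whenever this expected value is attained $ D_{7}(G) $ is $ 0 $-dimensional and its length is given by Porteous' formula. With $ \operatorname{rank}\mathcal{E} - 7 = 2 $ and $ \operatorname{rank}\mathcal{F} - 7 = 1 $, the $ 2 \times 2 $ determinant reduces to
\begin{equation}
[D_{7}(G)] = \det \begin{pmatrix} c_{1} & c_{2} \cr 1 & c_{1} \cr \end{pmatrix} = c_{1}^{2} - c_{2},
\end{equation}
where $ c_{i} = c_{i}(\mathcal{F} - \mathcal{E}) $ denotes the $ i $-th Chern class of the $ K $-theory difference.

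The remaining step is the Chern-class bookkeeping on $ \mathbf{P}_{2} \cong \mathbf{P}^{2} $, which is short once the factorizations are spotted. Let $ h $ be the hyperplane class, so $ h^{3} = 0 $. The dual Euler sequence gives $ c(\Omega_{\mathbf{P}_{2}}^{1}) = (1-h)^{3} $, whence $ c(\mathcal{F}) = (1-h)^{9}(1-h)^{2} = (1-h)^{11} $; the displayed resolution gives $ c(R^{1}\overline{q}_{\ast}\overline{p}^{\ast}(\mathcal{O}_{\mathbf{P}^2}(-4))) = (1-h)^{6} $, whence $ c(\mathcal{E}) = (1-h)^{18} $. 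Therefore
$$ c(\mathcal{F} - \mathcal{E}) = (1-h)^{11-18} = (1-h)^{-7} = 1 + 7h + 28h^{2} + \cdots, $$
so that $ c_{1} = 7h $, $ c_{2} = 28h^{2} $ and $ [D_{7}(G)] = (7h)^{2} - 28h^{2} = 21h^{2} $, i.e. $ 21 $ points, as claimed.

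The one genuinely delicate point — and what the word \emph{expected} is there to flag — is that Porteous' formula yields the length of $ D_{7}(G) $ only under the assumption that this scheme attains the expected codimension $ 2 $, equivalently that it is $ 0 $-dimensional. Establishing this rank-drop transversality a priori for a general arrangement of three conics is the main obstacle; the count of $ 21 $ is then valid on the (conjecturally dense) open locus of $ \mathbf{P}_{5} \times \mathbf{P}_{5} \times \mathbf{P}_{5} $ where it holds. As in the algorithm of Example 1 and Remark \ref{r:nounstcon}, one can in principle certify the genericity on explicit arrangements by a direct computation of the determinantal ideal of $ G $ in $ \mathbf{P}_{2} $, the line analogue of the conic computation carried out there.
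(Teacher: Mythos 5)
Your proposal is correct and follows essentially the same route as the paper: identify $UL(\Omega_{\mathbf{P}^2}^{1}(\log \mathcal{D}))$ with the support of $R^{1}\overline{q}_{\ast}\overline{p}^{\ast}(\Omega_{\mathbf{P}^2}^{1}(\log \mathcal{D})(-2))$, realize it as the degeneracy locus $D_{7}(G)$ of expected codimension $2$, and apply Porteous' formula. Your explicit Chern-class bookkeeping, $c(\mathcal{F}-\mathcal{E}) = (1-h)^{11-18} = (1-h)^{-7}$ giving $c_{1}^{2}-c_{2} = 49h^{2}-28h^{2} = 21h^{2}$, correctly fills in the computation that the paper only asserts, and your closing caveat about the expected codimension matches the paper's use of the word \emph{expected}.
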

\begin{rem}
The previous proposition holds, more generally, for all vector bundles $ E $ over $ \mathbf{P}^{2} $ admitting the exact sequence
$$ 0 \longrightarrow \mathcal{O}_{\mathbf{P}^{2}}(-2)^{3} \longrightarrow \mathcal{O}_{\mathbf{P}^{2}}(-1)^{3} \oplus \mathcal{O}_{\mathbf{P}^{2}}^{2} \longrightarrow E \longrightarrow 0.  $$
\end{rem}
By using Macaulay2 software system, we can find some examples of arrangements that behave as stated in Proposition \ref{p:21lines}. \\
\noindent{\bf Example 2}\\
\noindent Let us consider the arrangement $ \mathcal{D}_{0} = \{C_{0,1},C_{0,2},C_{0,3}\} $ of conics with normal crossings such that
$$ C_{0,1}: x_{0}^{2}+x_{1}^{2}-16x_{2}^{2} = 0, $$
$$ C_{0,2}: x_{0}^{2}+9x_{1}^{2}-36x_{2}^{2} = 0, $$
$$ C_{0,3}: 25x_{0}^{2}+100x_{0}x_{2}+x_{1}^{2}-2x_{1}x_{2}+76x_{2}^{2} = 0. $$
In order to determine $ UL(\Omega_{\mathbf{P}^2}^{1}(\log \mathcal{D}_{0})) $, we contruct the $ 3 \times 5 $ matrix associated to $ \Omega_{\mathbf{P}^2}^{1}(\log \mathcal{D}_{0})^{\vee} $ in the sense of (\ref{eq:Anconas}) and then we restrict it to a generic line $ L \subset \mathbf{P}^{2} $ parametrized by $ x_{0} = bx_{1}+cx_{2} $. Afterwards we produce the $ 9 \times 8 $ matrix $ M $ of the map
$$ H^{0}(L, \mathcal{O}_{\mathbf{P}^{2}}(1)^{3}_{|_{L}} \oplus {\mathcal{O}_{\mathbf{P}^{2}}^{2}}_{|_{L}}) \longrightarrow H^{0}(L, \mathcal{O}_{\mathbf{P}^{2}}(2)^{3}_{|_{L}}) $$ with respect to the basis given by $ \{\{x_{1},x_{2}\}, \{x_{1},x_{2}\}, \{x_{1},x_{2}\}, \{1\}, \{1\}\} $ and $ \{\{x_{1}^2, x_{1}x_{2}, x_{2}^{2}\}, \{x_{1}^2, x_{1}x_{2}, x_{2}^{2}\}, \{x_{1}^2, x_{1}x_{2}, x_{2}^{2}\} \} $. Since we are interested in the kernel of this linear map, we consider, over the ring $ k[b,c] $, where for simplicity $ k  = \mathbf{Q} $, the ideal $ J $ generated by the $ 8 $ maximal minors of $ M $: as expected, we get that $ J $ has dimension $ 0 $ and degree $ 21 $, in particular the algebra $ k[b,c] / J $ is a $ \mathbf{C} $-vector space of dimension $ 21 $. Therefore, let $ B $ a basis of $ k[b,c] / J $ and let $ compb $ (resp. $ compc $) be the $ 21 \times 21 $ matrix associated, with respect to $ B $, to the linear map (\emph{companion})
$$  k[b,c] / J  \longrightarrow  k[b,c] / J  $$  
defined by the multiplication by $ b $ (resp. by $ c $). According to \cite{CLO} (chapter 2, section 4), the eigenvalues of \emph{compb} (resp. \emph{compc}) coincide with the $ b $-coordinates (resp. $ c $-coordinates) of the points of the variety associated to $ J $. By using Stickelberger's Theorem (see for example \cite{St}, chapter 2, section 2.3), in order to get the pair of parameters $ (b_{i},c_{i}) $ defining an unstable line it suffices to match the eigenvalue $ b_{i} $ of \emph{compb} corresponding to the same eigenvector (up to a change of sign) of the eigenvalue $ c_{i} $ of \emph{compc}. \\
If $ \mathcal{D}_{0} $ is as above, then $ \Omega_{\mathbf{P}^2}^{1}(\log \mathcal{D}_{0}) $ has $ 21 $ unstable lines such that $ 11 $ are real.
\begin{figure}[h]
    \centering
    \includegraphics[width=130mm]{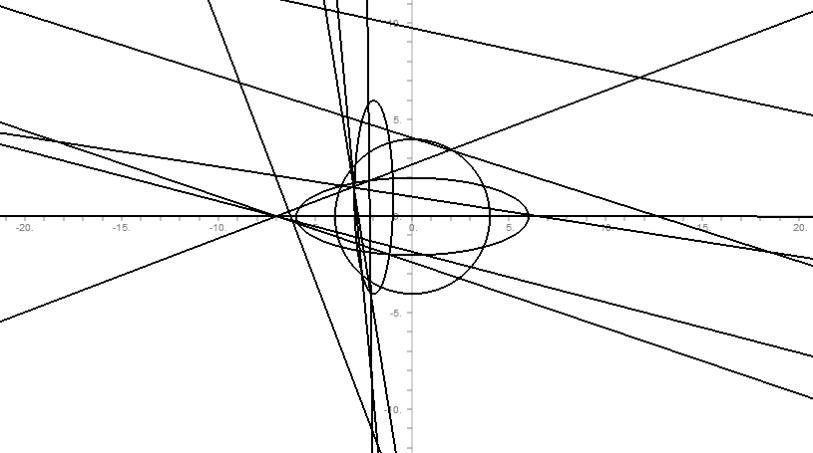}
    \caption{$ \mathcal{D}_{0} $ and the $ 11 $ real unstable lines plotted with \cite{SS}}
\end{figure}
\begin{rem} 
As we can see in Figure $ 3 $, it seems to be hard but interesting to understand what these lines represent for the conic-arrangement and how it is possible to get the conics from them: we observe, for example, that they are not tangent lines and they don't cross the conics in special points. So we can say that the three conics case represents still an open problem.    
\end{rem}


\begin{thebibliography}{}

\bibitem{AO}
V. Ancona, G. Ottaviani, \emph{Unstable hyperplanes for Steiner bundles and multidimensional matrices}, Advances in Geometry, 1 (2001), 165-192
\bibitem{A1}
E. Angelini, \emph{Logarithmic bundles of hypersurface arrangements in $ \mathbf{P}^n $}, Collectanea Mathematica, Volume 65, Issue 3, (2014), 285-302
\bibitem{A}
E. Angelini, \emph{The Torelli problem for Logarithmic bundles of hypersurface arrangements in the projective space}, Ph.D. thesis (2013), available on the author's web page (http://web.math.unifi.it/users/angelini/finalphdthesis.pdf) 
\bibitem{BHM}
E. Ballico, S. Huh, F. Malaspina, \emph{A Torelli type problem for logarithmic bundles over projective varieties} (2013), arXiv: 1309.7192v2 [math.AG]
\bibitem{BS}
G. Bohnhorst, H. Spindler, \emph{The stability of certain vector bundles on $ \mathbf{P}^n $}, Complex algebraic varieties, Lect. Notes Math., vol. 1507, 39-50 (1992)
\bibitem{CLO}
D. Cox, J. Little, D. O'Shea, \emph{Using Algebraic Geometry}, Graduate Texts in Mathematics, Springer-Verlag New York (1998) 
\bibitem{De}
P. Deligne, \emph{Th\'eorie de Hodge: II}, Publications math\'ematiques de l'I.H.\'E.S., tome 40 (1971), 5-57
\bibitem{De0}
P. Deligne, \emph{\'Equations diff\'erentielles \`a points singuliers r\'eguliers}, Lecture Notes in Mathematics, Vol. 163, Springer-Verlag, Berlin (1970)
\bibitem{DS}
A. Dimca, E. Sernesi, \emph{Syzygies and logarithmic vector fileds along plane curves}, arXiv: 1401.6838v1 [math.AG]
\bibitem{Do}
I. Dolgachev, \emph{Logarithmic sheaves attached to arrangements of hyperplanes}, J. Math. Kyoto Univ. 47 (2007), n. 1, 35-64
\bibitem{Do-Ka}
I. Dolgachev, M. M. Kapranov, \emph{Arrangements of hyperplanes and vector bundles on $ \mathbf{P}^n $}, Duke Math. J. 71 (1993), n. 3, 633-664
\bibitem{EC}
F. Enriques, O. Chisini, \emph{Lezioni sulla teoria geometrica delle equazioni e delle funzioni algebriche}, vol. 2, Zanichelli (1985)
\bibitem{FMV}
D. Faenzi, D. Matei, J. Vall\`es, \emph{Hyperplane arrangements of Torelli type}, Compositio Math., Volume 149, Issue 02, (2013), 309-332
\bibitem{H}
J. Harris, \emph{Algebraic Geometry: a first course}, Graduate Texts in Mathematics, Springer-Verlag New York (1992)
\bibitem{SS}
R. Morris, \emph{SingSurf - Interactive Geometry}, avalaible at http://www.singsurf.org/
\bibitem{Mum}
D. Mumford, \emph{The Red Book of Varieties and Schemes}, $ 2^{nd} $ edition, Springer-Verlag (1999)
\bibitem{OSS}
C. Okonek, M. Schneider, H. Spindler, \emph{Vector Bundles on Complex Projective Spaces}, Birkhauser (1980)
\bibitem{O0}
G. Ottaviaini, \emph{Variet\`a proiettive di codimensione piccola}, Istituto Nazionale di Alta Matematica ``Francesco Severi", Aracne editrice (1995)
\bibitem{Sa}
K. Saito, \emph{Theory of logarithmic differential forms and logarithmic vector fields}, J. Fac. Sci. Univ. Tokyo Sect. IA Math. 27 (1980), n. 2, 265-291
\bibitem{Schenck}
H. K. Schenck, \emph{Elementary modifications and line configurations in $ \mathbf{P}^2 $}, Comment. Math. Helv. 78 (2003), n. 3, 447-462
\bibitem{Sch2}
R. L. E. Schwarzenberger, \emph{The secant bundle of a projective variety}, Proc. London Math. Soc. 14 (1964), 369-384.
\bibitem{Sch1}
R. L. E. Schwarzenberger, \emph{Vector bundles on the projective plane}, London Math. Soc. 11 (1961), 623-640.
\bibitem{St}
B. Sturmfels, \emph{Solving Systems of Polynomial Equations}, Conference Board of the Mathematical Science, n. 97, AMS (2002).
\bibitem{UY2}
K. Ueda, M. Yoshinaga, \emph{Logarithmic vector fields along smooth divisors in projective spaces}, Hokkaido Math. Journal, vol. 38, no. 3, 409-415 (2009)
\bibitem{UY1}
K. Ueda, M. Yoshinaga, \emph{Logarithmic vector fields along smooth plane cubic curves}, Kumamoto Journal of Mathematics, vol. 21, 11-20 (2008)
\bibitem{V}
J. Vall\`es, \emph{Nombre maximal d'hyperplanes instables pour un fibr\'e de Steiner}, Math. Zeit., 233, 507-514 (2000)



\end{thebibliography}
\end{document}